\theoremstyle{plain}
\newtheorem{theorem}{Theorem}[section]
\newtheorem{lemma}[theorem]{Lemma}
\newtheorem{proposition}[theorem]{Proposition}
\theoremstyle{definition}
\newtheorem{definition}[theorem]{Definition}
\newtheorem{example}[theorem]{Example}
\theoremstyle{remark}
\newtheorem{remark}[theorem]{Remark}
\newcommand{\reals}{\mathbb{R}}
\newcommand{\naturals}{\mathbb{N}}
\newcommand{\integers}{\mathbb{Z}}
\newcommand{\boundary}[1]{\partial#1}
\newcommand{\tensor}{\otimes}
\newcommand{\into}{\hookrightarrow}
\newcommand{\iso}{\cong}
\newcommand{\disjointunion}{\amalg}
\newcommand{\union}{\cup}
\DeclareMathOperator{\sing}{sing}
\DeclareMathOperator{\Hom}{Hom}    
\DeclareMathOperator{\pr}{pr}
\DeclareMathOperator{\characteristic}{char}
\DeclareMathOperator{\TC}{TC}
\DeclareMathOperator{\wgt}{wgt}
\newcommand{\forget}[1]{}
\global\let\c@equation=\c@theorem}
\begin{document}
\pagestyle{myheadings}
\markboth{Luca Sandrock and Thomas Schick}{Topological Complexity of Symplectic CW-complexes}

\title{Topological Complexity of
  symplectic CW-complexes}

\author{Luca
  Sandrock\thanks{\protect\href{mailto:l.sandrock01@stud.uni-goettingen.de}{email:
      l.sandrock01@stud.uni-goettingen.de}} \ and Thomas Schick\thanks{
\protect\href{mailto:thomas.schick@math.uni-goettingen.de}{e-mail:
  thomas.schick@math.uni-goettingen.de}
\protect\\
\protect\href{https://www.uni-math.gwdg.de/schick}{www:~https://www.uni-math.gwdg.de/schick}}\\
Mathematisches Institut\\
Georg-August-Universit\"at G{\"o}ttingen\\
Germany}

\date{}
\maketitle

\begin{abstract}
  A cohomology class $u$ of a topological space $X$ is atoroidal if its pullback to the torus vanishes for every
  map from a torus to $X$. Furthermore, $X$ is  atoroidally symplectic if there is an atoroidal
  cohomology class $u\in H^2_{\sing}(X;F)$ such that $u^n \ne 0$.
  We prove that every atoroidally symplectic CW-complex $X$ of dimension $2n$ has topological complexity $4n$.
  This generalizes a result of Grant and Mescher who prove the corresponding statement in the case where $X$ is an
  atoroidally c-symplectic manifold and $u$ is a de Rham cohomology class.
  Using this generalisation, we obtain new calculations of topological complexity, including for many
  products of 3-manifolds and of group presentation complexes.
\end{abstract}

\section{Introduction}

Motion planning is a computational problem which is concerned with finding transitions between configurations and
frequently occurs in applications, such as in path planning for robot motions.
Topological complexity is a numerical homotopy invariant, originally defined in \cite{Farber}, which gives a rough,
but rigorous, lower bound on its complexity. To formalize this, let $X$ be a path-connected topological space (of
configurations) and
\begin{equation*}
  PX:= C([0,1],X)
\end{equation*}
the space of continuous paths in $X$ endowed with compact-open topology and let $\pi$ be the fiber bundle projection
\begin{equation*}
  \pi\colon PX \to X\times X; \; \gamma\mapsto (\gamma(0),\gamma(1)).
\end{equation*}

The \emph{topological complexity} of $X$, denoted $\TC(X)$, is the minimal
number $k$ of sets forming an open cover $\{U_i\}_{i=0,\dots, k}$ of
$X\times X$ such that for each $j\in\{0,\dots,k\}$ there exists a continuous
map $s_j\colon U_j\to PX$ with $\pi\circ s_j = \iota_j\colon U_j\into X\times
X$. We set $\TC(X):=\infty$ if no such $k$ exists.

The topological complexity has an easy upper bound in terms of dimension and
connectivity \cite[Theorem 5.2]{Farber_instabilities}: If $X$ is an
$(r-1)$-connected CW-complex (i.e.~every map $f\colon S^j\to X$ for $j<r$ is
null-homotopic) then
\begin{equation*}
  \TC(X)\le 2\frac{\dim(X)}{r}.
\end{equation*}
Of interest in this note is the case of a non-simply connected space, i.e.~$r=1$,
and we look for conditions which give a matching lower bound on the
topological complexity. 

This invariant has received considerable attention in the last decades. The
inspiration for this short note is the article \cite{GrantMescher} of Grant
and Mescher. In contrast to their work, we avoid the use of de Rham cohomology
on infinite dimensional manifolds and instead develop tools which use elementary
singular (co-)homology which allows us to generalize their main result from the
context of smooth manifolds to the one of arbitrary CW-complexes. An additional
benefit is that one now can also work with more general coefficient rings, further
generalizing the result. Along the way, we correct a small glitch in a Mayer-Vietoris
argument of \cite{GrantMescher} for the fiberwise join $P_2X$.

Some of the results of this thesis form part of Luca Sandrock's
bachelor thesis \cite{Sandrock} written at Universit\"at G\"ottingen.

The starting point of \cite{GrantMescher} are the following two definitions:

\begin{definition}
  Let $X$ be a topological space, $A$ a local coefficient system,
  $k\in\naturals$ and $u\in 
  H^k(X;A)$ a cohomology class. This class is called \emph{atoroidal} if
  $f^*u=0$ for every map $f\colon T^k\to X$.
\end{definition}
\begin{definition}
  A connected smooth manifold $M$ of dimension $2n$ is called c-symplectic if
  it is equipped with a closed 2-form $\omega\in\Omega^2(M)$ such that
  $[\omega]^n \ne 0\in H^{2n}_{dR}(M)$. Note that every closed symplectic
  manifold is also c-symplectic.

  A c-symplectic manifold $(M,\omega)$ is called \emph{atoroidally c-symplectic}
  if $\omega$ is atoroidal. In the given situation this is equivalent to
  \begin{equation*}
    \int_{T^2}f^*\omega =0 \quad\text{for every smooth map } f\colon T^2\to M.
  \end{equation*}
\end{definition}

The main result of \cite{GrantMescher} is its Theorem 1.2:
\begin{theorem}
  Let $(M,\omega)$ be an atoroidally c-symplectic manifold. Then
  \begin{equation*}
    \TC(M)= 2\dim(M).
  \end{equation*}
\end{theorem}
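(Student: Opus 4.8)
The plan is to establish the two inequalities separately. Writing $\dim M=2n$ and $u=[\omega]\in H^2_{dR}(M)$, the upper bound $\TC(M)\le 4n$ is immediate from the dimension--connectivity estimate quoted above applied with $r=1$ (a path-connected space is $0$-connected). It remains to prove $\TC(M)\ge 4n$, and for this I would work with the sectional-category (Schwarz genus) description of topological complexity and the zero-divisor
\[
  \bar u := 1\tensor u-u\tensor 1\ \in\ H^2(M\times M;\reals),
\]
which lies in $\ker\bigl(\Delta^*\colon H^*(M\times M)\to H^*(M)\bigr)$. The argument rests on two facts: (a) $\bar u^{2n}\ne 0$, and (b) the $\TC$-weight of $\bar u$ is at least $2$, i.e.\ $\bar u$ pulls back to $0$ under $\pi_1\colon P_1 M\to M\times M$, the two-fold fiberwise join of the path fibration. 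Granting these, superadditivity of the weight yields $\wgt(\bar u^{2n})\ge 2n\cdot\wgt(\bar u)\ge 4n$, and since a nonzero class of weight $k$ forces $\TC(M)\ge k$, we conclude $\TC(M)\ge 4n$.

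For (a) I would expand, in $H^*(M;\reals)\tensor H^*(M;\reals)$ and noting that all Koszul signs are trivial because $\abs{u}$ is even,
\[
  \bar u^{2n}=\sum_{k=0}^{2n}\binom{2n}{k}(-1)^k\,u^k\tensor u^{2n-k}.
\]
Since $\dim M=2n$ we have $u^{j}=0$ for $j>n$, so the only surviving summand is $k=n$, giving $\bar u^{2n}=\binom{2n}{n}(-1)^n\,u^n\tensor u^n$. The c-symplectic hypothesis gives $u^n\ne 0$, hence $u^n\tensor u^n\ne 0$ by Künneth, and $\binom{2n}{n}$ is invertible in $\reals$; therefore $\bar u^{2n}\ne 0$.

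The substance of the proof is (b). I would cover $P_1 M$ by the two open sets $U_0,U_1$ on which the join parameter avoids the opposite end; each deformation retracts onto a copy of $PM\simeq M$, and under these equivalences the projection $\pi_1|_{U_i}$ becomes homotopic to the diagonal $\Delta\colon M\to M\times M$, so $\bar u$ restricts to $\Delta^*\bar u=0$ on each $U_i$. The intersection $U_0\cap U_1$ is the fiberwise product $PM\times_{M\times M}PM$, which is homotopy equivalent to the free loop space $LM$, and both inclusions $U_0\cap U_1\to U_i\simeq M$ are homotopic to the evaluation $\mathrm{ev}\colon LM\to M$. The Mayer--Vietoris sequence therefore expresses $\pi_1^*\bar u$ as $\delta(\theta)$ for a class $\theta\in H^1(LM)$ well-defined modulo $\mathrm{ev}^*H^1(M)=\im\bigl(H^1(U_0)\oplus H^1(U_1)\to H^1(U_0\cap U_1)\bigr)$, and $\pi_1^*\bar u=0$ holds precisely when $\theta\in \mathrm{ev}^*H^1(M)$. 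The key identification, which requires care, is that $\theta$ agrees modulo $\mathrm{ev}^*H^1(M)$ with the fiber-integration class $\theta_u:=\int_{S^1}e^*u\in H^1(LM)$ obtained from the total evaluation $e\colon S^1\times LM\to M$, $(s,\ell)\mapsto\ell(s)$.

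Finally I would feed in atoroidality. Over $\reals$ one has $H^1(LM)=\Hom(H_1(LM),\reals)$, and $H_1(LM)$ is generated by the classes $h_*[S^1]$ of maps $h\colon S^1\to LM$; each such $h$ is adjoint to a map $H\colon T^2\to M$, and
\[
  \innerprod{\theta_u,\,h_*[S^1]}=\int_{T^2}H^*\omega=0
\]
by atoroidality. Hence $\theta_u=0$, so $\theta\in \mathrm{ev}^*H^1(M)$ and $\pi_1^*\bar u=\delta(\theta)=0$, giving $\wgt(\bar u)\ge 2$ and completing the lower bound. The main obstacle I anticipate lies in making the fiberwise-join computation rigorous: identifying $U_0\cap U_1\simeq LM$ together with its restriction maps, and above all matching the Mayer--Vietoris connecting class $\theta$ with the fiber-integration class $\theta_u$. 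This bookkeeping (and its analogue for the higher joins $P_k M$, needed to promote $\wgt(\bar u)\ge 2$ to the full estimate via superadditivity) is precisely the delicate point where the Mayer--Vietoris glitch of \cite{GrantMescher} occurs and has to be repaired.
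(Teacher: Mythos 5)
Your skeleton coincides with the paper's (which itself follows Grant--Mescher): the upper bound from the dimension--connectivity estimate, the zero-divisor $\overline u$, the K\"unneth computation in your step (a) showing $\overline u^{2n}=(-1)^n\binom{2n}{n}\,u^n\otimes u^n\neq 0$, and the reduction of the lower bound to $\wgt(\overline u)\ge 2$ via superadditivity of the $\TC$-weight are all correct; your $P_1M$ with projection $\pi_1$ is the paper's $P_2X$ with $\pi_2$. One misconception worth correcting: superadditivity, Theorem \ref{theo:wgt_rules}, is a quotable general result of Farber and Grant, so no ``analogue for the higher joins $P_kM$'' of your Mayer--Vietoris analysis is needed, contrary to your closing parenthetical; once $\wgt(\overline u)\ge 2$ is established, the lower bound follows formally.

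The genuine gap is exactly the step you flag and defer: matching the Mayer--Vietoris connecting class $\theta$ with the fiber-integration class $\theta_u$ modulo $\mathrm{ev}^*H^1(M)$. This is not bookkeeping to be postponed --- it is the entire technical content of the theorem. From exactness you only know $\pi_1^*\overline u=\delta(\theta)$ for \emph{some} $\theta\in H^1(LM)$, and the vanishing of $\theta_u$ by atoroidality gives nothing until $\theta\equiv\theta_u$ is proved. The paper fills precisely this hole at the cochain level: via the exponential law it constructs an explicit singular cochain $b_c\in C^1_{\sing}(PM;A)$ with $\partial b_c=\pi^*\overline c$ (a cochain-level transgression, Definition \ref{def:curring}), shows $a_c:=r_1^*b_c-r_2^*b_c$ is a cocycle on $LM$, and computes, using the \emph{honest} short exact sequence of singular cochain complexes attached to the mapping-cylinder decomposition $P_2X=Z(r_1)\cup Z(r_2)$, that $\delta_{MV}[a_c]=\pi_2^*[\overline c]$ on the nose (Proposition \ref{prop:MV_cocycle}); only then does atoroidality enter, by evaluating $a_c$ on a loop $\sigma$ in $LM$ and identifying the result with the evaluation of $\Psi(\sigma)^*c$ on a fundamental cycle of $T^2$ (Proposition \ref{prop:vanish_ac}), which is the singular-cochain analogue of your Fubini pairing. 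Note also that your formulation of $\theta_u=\int_{S^1}e^*u$ requires either de Rham theory on the infinite-dimensional space $LM$ --- the route of Grant--Mescher, and the exact place where their glitch lives: the sequence of cochain complexes they feed to the snake lemma is not exact, only chain homotopy equivalent to an exact one, so the connecting homomorphism cannot be computed from it without further justification --- or a slant-product definition in singular cohomology; in either formulation, the compatibility of $\delta_{MV}$ with this transgression class is the lemma your proof is missing.
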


We generalize this result to arbitrary CW-complexes and cohomology with
general coefficients.

\begin{theorem}\label{theo:main}
  Let $X$ be a connected CW-complex of dimension $2n$ and let $F$ be a
  coefficient field with $\characteristic(F)=0$ or
  $\characteristic(F)>2n$. Assume that 
  there exists an atoroidal cohomology class
  $u\in H^2(X;F)$ such that $u^n \ne 0 \in
  H^{2n}(X;F)$.

  Then  $\TC(X) = 2\dim(X)$.
\end{theorem}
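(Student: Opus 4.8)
The plan is to establish the two inequalities $\TC(X)\le 4n$ and $\TC(X)\ge 4n$ separately, where $4n=2\dim(X)$. The upper bound is immediate from the connectivity–dimension estimate quoted in the introduction: since $X$ is connected it is $0$-connected, so taking $r=1$ gives $\TC(X)\le 2\dim(X)=4n$. All the real work is in the lower bound, for which I would use the theory of sectional category weight applied to the path fibration $\pi\colon PX\to X\times X$, recalling that $\TC(X)=\operatorname{secat}(\pi)$. The natural candidate is the zero-divisor $\bar u:=u\otimes 1-1\otimes u\in H^2(X\times X;F)$, which satisfies $\Delta^*\bar u=u-u=0$ for the diagonal $\Delta$. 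The strategy rests on three ingredients: that $\bar u^{\,2n}\ne 0$, that $\wgt(\bar u)\ge 2$, and superadditivity of the weight under cup products.

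First I would record the routine cup-length computation. Since $\deg u=2$ is even the two summands of $\bar u$ commute, so
\[
  \bar u^{\,2n}=\sum_{k=0}^{2n}\binom{2n}{k}(-1)^k\,u^{2n-k}\otimes u^{k}.
\]
Because $\dim(X)=2n$ forces $u^{j}=0$ for $j>n$, every term with $k\ne n$ vanishes, leaving $\bar u^{\,2n}=(-1)^n\binom{2n}{n}\,u^n\otimes u^n$. The hypothesis on $\operatorname{char}(F)$ guarantees $\binom{2n}{n}\ne 0$ in $F$ (all of its prime factors are at most $2n$), while $u^n\ne 0$ together with the K\"unneth isomorphism over the field $F$ gives $u^n\otimes u^n\ne 0$; hence $\bar u^{\,2n}\ne 0$. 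Note that $\wgt(\bar u)\ge 1$ alone, which holds for any zero-divisor, would only yield $\TC(X)\ge 2n$; the entire point of atoroidality is to upgrade this to $\wgt(\bar u)\ge 2$. Granting that, superadditivity gives $\wgt(\bar u^{\,2n})\ge 2n\cdot\wgt(\bar u)\ge 4n$, and since $\bar u^{\,2n}\ne 0$ the standard inequality $\operatorname{secat}(\pi)\ge\wgt(c)$ for nonzero $c$ produces $\TC(X)\ge 4n$.

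The crux is therefore the claim $\wgt(\bar u)\ge 2$. I would unwind the definition: given any $\phi=(\phi_1,\phi_2)\colon Y\to X\times X$ for which the pulled-back fibration $\phi^*\pi$ has sectional category at most $1$, there is an open cover $Y=V_0\cup V_1$ with $\phi_1\simeq\phi_2$ on each $V_i$ via a homotopy $H_i$; the goal is to deduce $\phi^*\bar u=\phi_1^*u-\phi_2^*u=0$ in $H^2(Y;F)$. Each $H_i$ provides, at the cochain level, a primitive $\beta_i$ for a representing cocycle of $\phi^*\bar u$ on $V_i$, so that $\phi^*\bar u$ restricts to zero on both $V_0$ and $V_1$. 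The difference $\beta_0-\beta_1$ is then a cocycle on the overlap $V_0\cap V_1$, and its class in $H^1(V_0\cap V_1;F)$ is exactly the Mayer–Vietoris obstruction to the global vanishing of $\phi^*\bar u$. On each $1$-cycle $\gamma$ of the overlap this obstruction is measured by the concatenated \emph{loop of maps} $H_0*\overline{H_1}$ restricted to $\gamma$, i.e.\ by the pullback of $u$ under a map $T^2\to X$ (the torus being $\gamma\times S^1$, with the two homotopies assembled into the $S^1$-direction). By the atoroidal hypothesis every such pullback vanishes, so the obstruction class is zero, the local primitives can be glued to a global one, and $\phi^*\bar u=0$.

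I expect the main obstacle to be carrying out this last argument purely in singular cohomology rather than in de Rham theory. The transgression of $\phi^*\bar u$ along each homotopy must be defined at the (co)chain level, and the comparison of the two transgressions on the overlap must be organised so that the resulting $1$-dimensional class is genuinely detected by tori — this is precisely the fiberwise-join / $P_2X$ bookkeeping where the paper reports correcting a glitch in \cite{GrantMescher}. Once the overlap obstruction is identified with the set $\{f^*u:f\colon T^2\to X\}$ and killed by atoroidality, the remaining step of adjusting the two local primitives by a partition of unity into a global primitive is formal, and the weighted cup-length argument above closes out the lower bound.
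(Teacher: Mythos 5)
Your proposal is correct, and while its skeleton (connectivity upper bound, zero-divisor cup-length computation, weight superadditivity) coincides with the paper's, it takes a genuinely different route to the crucial bound $\wgt(\overline u)\ge 2$. The paper never examines a general test map: it quotes the criterion of Proposition \ref{prop:TC_crit}, which reduces the weight bound to the vanishing of $\pi_2^*\overline u$ on the single universal space $P_2X$, and then runs Mayer--Vietoris once, for the decomposition $P_2X=Z(r_1)\cup Z(r_2)$ with overlap $LX$; the connecting-map identity is Proposition \ref{prop:MV_cocycle}, and atoroidality enters because a loop in $LX$ is, by adjunction, precisely a torus in $X$ (Proposition \ref{prop:vanish_ac}, equation \eqref{eq:compute_ac}). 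You instead verify the definition of $\wgt\ge 2$ directly: Mayer--Vietoris on an arbitrary $Y=V_0\cup V_1$ carrying sections, with the two homotopies over a loop in $V_0\cap V_1$ glued into a torus. The two arguments have identical content --- your local primitives $\beta_i$ are exactly the pullbacks $f_i^*b_c$ of the paper's universal cochain $b_c$ of Definition \ref{def:curring}, and running your argument for $Y=P_2X$ with its canonical sections recovers the paper's proof --- but they package it differently. Yours is more self-contained, since it does not need the (nontrivial, cited) criterion of Proposition \ref{prop:TC_crit}, and your cover is open, so the singular Mayer--Vietoris sequence is the standard small-simplices one, whereas the paper's closed mapping-cylinder pieces $Z(r_i)$ make its displayed ``short exact sequence'' of cochain complexes exact only after the usual small-chains/excisive-couple justification. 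The paper's version buys universality: the chain-level bookkeeping is done once, on a space where the adjunction between loops in $LX$ and maps $T^2\to X$ is tautological, and the resulting propositions are reusable; your version must redo that bookkeeping relative to each test map, though the work is the same. Your cup-length computation is in fact slightly cleaner than the paper's, since you note that all terms $u^{2n-k}\otimes u^k$ with $k\ne n$ vanish for dimension reasons, rather than isolating one K\"unneth component.

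Two small repairs to your write-up. The closing appeal to a ``partition of unity'' is a de Rham reflex with no meaning for singular cochains, and it is also unnecessary: once the overlap class $\alpha=[\beta_0-\beta_1]\in H^1(V_0\cap V_1;F)$ vanishes, the identity $\delta_{MV}(\alpha)=\phi^*\overline u$ that you already established gives $\phi^*\overline u=0$ outright. Also, to conclude $\alpha=0$ from its vanishing on loops, you should invoke the injectivity of $H^1(V_0\cap V_1;F)\to\Hom(H_1(V_0\cap V_1;\integers),F)$ (universal coefficients in degree one) together with the Hurewicz theorem on each path component; and the identity you defer --- that evaluating $\beta_0-\beta_1$ on a loop equals pairing $c$ pulled back along the glued torus with a fundamental cycle of $T^2$ --- is genuine chain-level work (it is the content of Definition \ref{def:curring}, the lemma following it, and equation \eqref{eq:compute_ac}), but you have located it correctly and its outcome is exactly what you claim.
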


We also have a variant of the theorem which is suitable in particular for
products of 2-dimensional aspherical complexes:
\begin{theorem}\label{theo:specialmain}
  Let $X$ be a connected CW-complex of dimension $2n$ and let $F$ be a coefficient field with
  $\characteristic(F)\ne 2$. Assume there exist atoroidal cohomology classes $u_1,\dots,u_n\in H^2(X;F)$ such
  that $u_j^2=0$ for $j=1,\dots,n$ and $u_1\cup\dots\cup u_n\ne 0\in
  H^{2n}(X;F)$.

  Then  $\TC(X)=2\dim(X)$.
\end{theorem}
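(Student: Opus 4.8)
The plan is to prove the two inequalities $\TC(X)\le 4n$ and $\TC(X)\ge 4n$ separately. The upper bound is immediate from the cited dimension--connectivity estimate: $X$ is connected, hence $0$-connected, so with $r=1$ one gets $\TC(X)\le 2\dim(X)=4n$. All the content lies in the lower bound. Here ordinary zero-divisor cup length is insufficient: writing $\bar u_j:=u_j\tensor 1-1\tensor u_j\in H^2(X\times X;F)$ for the basic zero-divisors, any nonzero product of them lands in the top group $H^{4n}(X\times X;F)\iso H^{2n}(X;F)\tensor H^{2n}(X;F)$, so it has at most $2n$ factors and yields only $\TC(X)\ge 2n$. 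To reach $4n$ I would pass to the \emph{weighted} cup-length bound, using the strict category weight $\wgt(\cdot)$ with respect to the fibration $\pi\colon PX\to X\times X$ (Fadell--Husseini, Farber--Grant), which is super-additive under cup products and satisfies $\TC(X)\ge\wgt(z)$ for every nonzero zero-divisor $z$.

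The decisive step -- and the one I expect to be the main obstacle -- is the weight estimate: if $u\in H^2(X;F)$ is atoroidal then $\wgt(\bar u)\ge 2$. I would prove it from the test-space description of weight: it suffices to show $g^*\bar u=0$ for every map $g=(g_0,g_1)\colon Y\to X\times X$ whose pullback fibration $g^*\pi$ has sectional category $\le 1$. Such a $Y$ is covered by two open sets $V_0,V_1$ on each of which $g_0$ and $g_1$ are homotopic, say by homotopies $H_0,H_1$. Then $g_0^*u=g_1^*u$ on each $V_i$, so by Mayer--Vietoris $g_0^*u-g_1^*u=\delta(c)$ for a class $c\in H^1(V_0\cap V_1;F)$, where $\delta$ is the connecting homomorphism. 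A standard transgression computation identifies $c$ as the integration over the loop direction of $G^*u$, where $G\colon (V_0\cap V_1)\times S^1\to X$ is the map built by running $H_0$ and then $H_1$ backwards between $g_0$ and $g_1$. Now atoroidality enters decisively: for any loop $\alpha\colon S^1\to V_0\cap V_1$ the composite $G\circ(\alpha\times\id)\colon T^2\to X$ pulls $u$ back to zero, so $c$ pairs trivially with $\alpha_*[S^1]$; since over the field $F$ the group $H^1(V_0\cap V_1;F)$ is detected by such loops, $c=0$ and hence $g^*\bar u=\delta(c)=0$. This is also the place where one must handle the Mayer--Vietoris sequence for the fibrewise join carefully, the point flagged in the introduction.

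Granting the weight estimate the lower bound is a short computation. Since $\characteristic(F)\ne 2$ and each $u_j$ has even degree with $u_j^2=0$, one has $\bar u_j^{\,2}=-2\,u_j\tensor u_j$, and therefore, by the K\"unneth isomorphism over the field $F$,
\begin{equation*}
  \prod_{j=1}^{n}\bar u_j^{\,2}=(-2)^n\,(u_1\cup\dots\cup u_n)\tensor(u_1\cup\dots\cup u_n)\neq 0,
\end{equation*}
because $u_1\cup\dots\cup u_n\ne 0$ and $(-2)^n$ is invertible in $F$. This is a cup product of the $2n$ zero-divisors $\bar u_1,\bar u_1,\dots,\bar u_n,\bar u_n$, each of weight $\ge 2$ by the lemma; super-additivity of strict category weight gives $\wgt\bigl(\prod_j\bar u_j^{\,2}\bigr)\ge 4n$, whence $\TC(X)\ge 4n$. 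Together with the upper bound this proves $\TC(X)=4n=2\dim(X)$. The companion Theorem~\ref{theo:main} follows by the same scheme with $\bar u^{\,2n}=\binom{2n}{n}(-1)^n\,u^n\tensor u^n$ in place of $\prod_j\bar u_j^{\,2}$; this explains why there one needs $\binom{2n}{n}$ to be invertible, i.e.\ $\characteristic(F)=0$ or $>2n$, rather than merely $\characteristic(F)\ne 2$.
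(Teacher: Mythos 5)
Your proposal is correct, and it reaches the crucial weight estimate $\wgt(\overline u_j)\ge 2$ by a genuinely different route from the paper. The paper never argues on an arbitrary test space: it quotes the Grant--Mescher criterion (Proposition \ref{prop:TC_crit}) to reduce the estimate to showing $\pi_2^*\overline u=0$ on the fiberwise join, and then runs Mayer--Vietoris on $P_2X=Z(r_1)\cup Z(r_2)$ with intersection the free loop space $LX$, where the connecting class is the explicit cocycle $a_c=r_1^*b_c-r_2^*b_c$ built from the currying cochain $b_c$ (Definition \ref{def:curring}, Propositions \ref{prop:MV_cocycle} and \ref{prop:vanish_ac}). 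You instead verify the bound directly from the definition of $\wgt$: Mayer--Vietoris on the test space $Y=V_0\cup V_1$ itself, with the connecting class identified as fiber integration over $S^1$ of $G^*u$, where $G$ concatenates the homotopies adjoint to the two local sections. The two arguments are structurally parallel---your $G\circ(\alpha\times\id)\colon T^2\to X$ plays exactly the role of the paper's adjoint $\Psi(\sigma)$ of a loop in $LX$, and your detection of $H^1(V_0\cap V_1;F)$ by loops is the paper's universal-coefficient/Hurewicz step---but your version buys two things: it needs no universal property of the fiberwise join (Proposition \ref{prop:TC_crit} is not invoked at all), and since $V_0,V_1$ form an honest open cover, the Mayer--Vietoris sequence is available on the nose, so the ``glitch'' the paper corrects in the Grant--Mescher argument never threatens. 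What the paper's route buys in exchange is a reusable criterion proved once on the universal space $P_2X$, rather than an argument quantified over all test spaces $Y$. One caveat: the step you call a ``standard transgression computation'', namely $\delta(c)=g^*\overline u$ for $c$ the fiber integration of $G^*u$, is exactly where the cochain-level work sits; it is true, and is proved the same way as the paper's Proposition \ref{prop:MV_cocycle}---the prism operators of $H_0,H_1$ give cochains $b_i\in C^1_{\sing}(V_i;F)$ with $\partial b_i=(g_1^*-g_0^*)c$ on $V_i$, and $(b_0-b_1)|_{V_0\cap V_1}$ is the slant product of $G^*c$ with the fundamental cycle of $S^1$---so a complete write-up must carry it out; it is the analogue of the paper's Section 4.1 and not shorter. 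Your upper bound, the K\"unneth computation $\overline u_1^2\cup\dots\cup\overline u_n^2=(-2)^n(u_1\cup\dots\cup u_n)\tensor(u_1\cup\dots\cup u_n)\ne 0$ (using $u_j^2=0$ and $\characteristic(F)\ne 2$), and the super-additivity step coincide with the paper's proof.
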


Following \cite{GrantMescher}, the proof of Theorem \ref{theo:main} and
Theorem \ref{theo:specialmain} uses the
concept of $\TC$-weight of cohomology classes introduced in
\cite{FarberGrant}. Here, $u\in H^*(X\times X;F)$ has $\wgt(u)\ge 2$ if and
only if it is in the kernel of the homomorphism induced from the fiberwise join
$p_2\colon P_2X\to X\times X$ of the free path fibration with itself,
introduced below in Definition \ref{def:LX_PX}. The key
point is that
\begin{equation*}
  u\ne 0\; \implies \;\TC(X)\ge \wgt(u).
\end{equation*}

Our main technical result is that for $u\in H^2(X;F)$ the associated
zero-divisor
\begin{equation*}
  \overline u:= 1\times u-u\times 1\in H^2(X\times X; F)
\end{equation*}
has
$\wgt(\overline u)=2$.

\section{Cohomological Lower Bounds for the Topological Complexity}

In this section, we recall the concept of $\TC$-weight introduced in
\cite{FarberGrant} and defined as follows.
\begin{definition}
  Let $X$ be a CW-complex and $A$ a local coefficient system on $X\times
  X$. The $\TC$-weight $\wgt(u)\in \{0,1,\dots\}\union\{\infty\}$ of a class
  $u\in H^*(X\times X;A)$ is the largest $k$ such that $f^*u=0\in H^*(Y;f^*A)$
  for every continuous map $f\colon Y\to X\times X$ for which there exists an
  open covering $\{U_1,\dots,U_k\}$ of $Y$ and continuous maps $f_i\colon
  U_i\to PX$ with $\pi\circ f_i=f|_{U_i}$ for $i=1,\dots,k$.
\end{definition}

The following result from \cite[Proposition 2]{FarberGrant} allows to use the $\TC$-weight efficiently to get
lower bounds on the topological complexity:

\begin{theorem}\label{theo:wgt_rules}
  Let $X$ be a CW-complex with local coefficient systems $A,B$ on $X\times
  X$. Assume that $u_A\in H^*(X\times X;A)$ and $u_B\in H^*(X\times X;B)$ are
  cohomology classes.

  Then  $u_A\cup u_B \in H^*(X\times X; A\tensor B)$ satisfies
  \begin{equation*}
    \wgt(u_A\cup u_B) \ge \wgt(u_A) + \wgt(u_B).
  \end{equation*}

  Moreover, if $u_A\ne 0$ then $\TC(X)\ge \wgt(u_A)$.
\end{theorem}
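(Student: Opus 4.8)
The plan is to derive both assertions directly from the definition of $\TC$-weight, the engine being the relative cup product. Write $k=\wgt(u_A)$ and $\ell=\wgt(u_B)$ (both finite, the infinite case being formal). To establish $\wgt(u_A\cup u_B)\ge k+\ell$, I would start from an arbitrary continuous map $f\colon Y\to X\times X$ equipped with an open cover $\{W_1,\dots,W_{k+\ell}\}$ of $Y$ and sections $g_i\colon W_i\to PX$ with $\pi\circ g_i=f|_{W_i}$; by the definition of weight it suffices to prove $f^*(u_A\cup u_B)=0$. The first step is to split the cover into the two blocks $V_A:=W_1\cup\dots\cup W_k$ and $V_B:=W_{k+1}\cup\dots\cup W_{k+\ell}$. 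Since $V_A$ inherits a cover of size $k$ together with sections (the $g_i$ with $i\le k$), the defining property $\wgt(u_A)\ge k$ applied to $f|_{V_A}$ yields $(f|_{V_A})^*u_A=0$, and symmetrically $(f|_{V_B})^*u_B=0$.

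The heart of the argument is the passage to relative cohomology. Because $f^*u_A$ restricts to zero on $V_A$, the long exact sequence of the pair $(Y,V_A)$ supplies a lift $\tilde u_A\in H^*(Y,V_A;f^*A)$ of $f^*u_A$; likewise one obtains $\tilde u_B\in H^*(Y,V_B;f^*B)$. As $V_A$ and $V_B$ are open, $\{V_A,V_B\}$ is an excisive couple, so the relative cup product
\[
  H^*(Y,V_A;f^*A)\otimes H^*(Y,V_B;f^*B)\longrightarrow H^*(Y,V_A\cup V_B;f^*(A\otimes B))
\]
is defined and compatible with the restriction to absolute cohomology. Since the $W_i$ cover $Y$ we have $V_A\cup V_B=Y$, so the relative product $\tilde u_A\cup\tilde u_B$ lives in $H^*(Y,Y;\cdot)=0$; its image in $H^*(Y;f^*(A\otimes B))$ equals $f^*u_A\cup f^*u_B=f^*(u_A\cup u_B)$, which is therefore zero. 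This proves the weight inequality.

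For the second statement I would feed the identity map into the weight definition. If $\TC(X)=k$, then $X\times X$ admits an open cover $\{U_0,\dots,U_k\}$ of $k+1$ sets carrying local sections $s_j$ of $\pi$; taking $Y=X\times X$ and $f=\id$ exhibits precisely such a cover. Consequently, were $\wgt(u_A)\ge k+1$, the definition would force $u_A=\id^*u_A=0$; contrapositively, $u_A\ne 0$ yields $\wgt(u_A)\le k=\TC(X)$, as claimed.

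The main obstacle I anticipate is the careful treatment of the relative cup product with local and tensored coefficients: one must verify that the lifts $\tilde u_A,\tilde u_B$ exist, that the relative product is genuinely defined on the excisive couple $\{V_A,V_B\}$, and that it is natural both with respect to the restriction into absolute cohomology and with respect to the coefficient pairing into $A\otimes B$ (equivalently $f^*A\otimes f^*B\cong f^*(A\otimes B)$). A secondary point requiring care is the bookkeeping of the reduced cover convention, so that the split into blocks of sizes $k$ and $\ell$ lines up exactly with the indexing used in the definition of $\wgt$ and of $\TC$.
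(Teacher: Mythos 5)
Your proof is correct, and it takes the standard route: note that the paper itself does not prove this theorem but quotes it as \cite[Proposition 2]{FarberGrant}, and your argument --- splitting the cover into blocks $V_A$, $V_B$, lifting the vanishing restrictions to relative classes in $H^*(Y,V_A;f^*A)$ and $H^*(Y,V_B;f^*B)$, and multiplying them via the relative cup product of the excisive couple $\{V_A,V_B\}$ into $H^*(Y,Y;\cdot)=0$ --- is essentially the proof given in that reference, which in turn follows the classical category-weight arguments of Fadell--Husseini and Rudyak. You also get the convention bookkeeping right for the second claim: with the paper's reduced definition, $\TC(X)=k$ yields a cover of $X\times X$ by $k+1$ sets with sections, so $\wgt(u_A)\ge k+1$ would force $u_A=\id^*u_A=0$, giving $\wgt(u_A)\le\TC(X)$ whenever $u_A\ne 0$.
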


A criterion for the $\TC$-weight to be (at least) 2 is derived in
\cite{GrantMescher}. It uses the fiberwise join $P_2X$ of two copies of $\pi \colon PX\to X\times
X$ and has the following explicit definition.

\begin{definition}\label{def:LX_PX}
  Let $X$ be a CW-complex with path space $PX$ and end point map $\pi\colon
  PX\to X\times X$. Define the free loop space
\begin{equation*}
  LX:= \{\gamma\in PX \mid \gamma(0)=\gamma(1)\}
\end{equation*}
  and for $j=1,2$ the continuous maps
  \begin{equation}\label{eq:def_of_rj}
    r_j\colon LX\to PX; \;\gamma\mapsto \gamma|_{I_j}\circ \psi_j
  \end{equation}
  where $I_1:= [0,\frac{1}{2}]$ and $I_2:=[\frac{1}{2},1]$ are the left and
  right half interval of $[0,1]$ and $\psi_j\colon [0,1]\to I_j$ are the 
  affine linear homeomorphisms $\psi_1(t):=\frac{t}{2}$ and
  $\psi_2(t):=1-\frac{t}{2}$. Note that $\psi_1$ preserves the orientation
  whereas $\psi_2$ reverses it.

  One now defines $P_2X$ as the double mapping cylinder of $r_1$ and $r_2$, i.e.
  \begin{equation*}
    P_2X:= \big((LX\times [1,2])\disjointunion (PX\times \{1\}) \disjointunion (PX\times \{2\})\big)/\sim
  \end{equation*}
  with equivalence relation $\sim$ generated by $(\gamma,1)\sim
  (r_1(\gamma),1)$ and $(\gamma,2)\sim (r_2(\gamma),2)$ for $\gamma\in
  LX$. We get two canonical inclusions $\iota_j\colon PX\to P_2X;
  \gamma\mapsto [(\gamma,j)]$ for $j=1,2$.

  Since $\pi\circ r_1=\pi\circ r_2$ for the endpoint projection
  $\pi\colon PX\to X\times X$, this map descends to a well defined map
  \begin{equation*}
    \pi_2\colon P_2X\to X\times X; \; [\gamma,t]\mapsto
    \begin{cases}
      (\gamma(0),\gamma(\frac{1}{2})) ;& \gamma\in LX\\
      (\gamma(0),\gamma(1)); & \gamma\in PX
    \end{cases}
    .
  \end{equation*}

\end{definition}

The following \cite[Proposition 2.3]{GrantMescher} is the promised criterion for $\TC$-weight.
\begin{proposition}\label{prop:TC_crit}
  Let $A$ be a local coefficient system on $X\times X$ and $u\in H^*(X\times
  X; A)$. 
  \begin{equation*}
    \text{If }\pi_2^*u = 0 \in H^*(P_2X; \pi_2^*A)\text{ then }\wgt(u)\ge 2.
  \end{equation*}
\end{proposition}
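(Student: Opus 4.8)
The plan is to verify the defining property of $\wgt(u)\ge 2$ by hand, exhibiting the required vanishing through a single universal construction. Concretely, let $f\colon Y\to X\times X$ be continuous and suppose there is an open cover $\{U_1,U_2\}$ of $Y$ together with continuous maps $f_i\colon U_i\to PX$ satisfying $\pi\circ f_i=f|_{U_i}$ for $i=1,2$; I must show $f^*u=0\in H^*(Y;f^*A)$. The idea is that $\pi_2\colon P_2X\to X\times X$ is the universal target for such data, so I would construct a continuous lift $\tilde f\colon Y\to P_2X$ with $\pi_2\circ\tilde f=f$. Granting the lift, functoriality of pullback for local coefficients gives $f^*A=\tilde f^*\pi_2^*A$ and hence $f^*u=\tilde f^*(\pi_2^*u)=\tilde f^*0=0$, using the hypothesis $\pi_2^*u=0$. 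Thus the whole proposition reduces to producing $\tilde f$.

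To build $\tilde f$, I would first choose, via a partition of unity subordinate to $\{U_1,U_2\}$ (the covers in the definition of weight may be taken numerable, which is automatic for the normal spaces relevant to the applications), a continuous function $\phi\colon Y\to[1,2]$ with $\phi\equiv 1$ on $Y\setminus U_2$ and $\phi\equiv 2$ on $Y\setminus U_1$. On the overlap $U_1\cap U_2$ the paths $f_1(y)$ and $f_2(y)$ share both endpoints, namely the two coordinates of $f(y)$, so I can concatenate $f_1(y)$ with the reverse of $f_2(y)$ to obtain a loop $\gamma_y\in LX$; by the choice of the reparametrisations $\psi_1,\psi_2$ it satisfies $r_1(\gamma_y)=f_1(y)$, $r_2(\gamma_y)=f_2(y)$ and $\gamma_y(\tfrac12)$ equal to the common terminal endpoint. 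I then set
\begin{equation*}
  \tilde f(y):=
  \begin{cases}
    [\gamma_y,\phi(y)], & y\in U_1\cap U_2,\\
    [f_1(y),1], & y\in U_1\setminus U_2,\\
    [f_2(y),2], & y\in U_2\setminus U_1.
  \end{cases}
\end{equation*}
The relations $(\gamma,1)\sim(r_1(\gamma),1)$ and $(\gamma,2)\sim(r_2(\gamma),2)$ make the three clauses agree wherever $\phi(y)\in\{1,2\}$, and the formula for $\pi_2$ gives $\pi_2(\tilde f(y))=(\gamma_y(0),\gamma_y(\tfrac12))=f(y)$ on $U_1\cap U_2$ and $\pi_2(\tilde f(y))=\pi(f_i(y))=f(y)$ on the two remaining pieces, so $\pi_2\circ\tilde f=f$.

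The hard part will be the continuity of $\tilde f$ at the seams where its definition switches, that is at points with $\phi(y)\in\{1,2\}$. I would verify it on the open cover $\{\phi^{-1}[1,2),\ \phi^{-1}(1,2]\}$ of $Y$. On $\phi^{-1}(1,2]$ one has $y\in U_2$ throughout, so $f_2$ is defined and continuous there; the only delicate point is a $y_0$ with $\phi(y_0)=2$, where $\tilde f(y_0)=[f_2(y_0),2]$ and where $\gamma_y$ need not converge as $y\to y_0$. This is resolved by the mapping-cylinder topology: a basic neighbourhood of a base point $[p,2]$ has the form $\{[\gamma,t]:r_2(\gamma)\in V,\ t>2-\varepsilon\}\union\{[q,2]:q\in V\}$ for $V\ni p$ open in $PX$, and since $\phi(y)\to 2$ and $r_2(\gamma_y)=f_2(y)\to f_2(y_0)$, the point $\tilde f(y)$ eventually lies in it — the dependence on the unknown half of the loop is collapsed precisely at the gluing end. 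The symmetric argument on $\phi^{-1}[1,2)$ using $r_1$ finishes the continuity of $\tilde f$, and with it the construction of the lift and the proof.
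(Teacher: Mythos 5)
Your strategy is the same one that underlies the paper's source for this statement: Proposition~\ref{prop:TC_crit} is quoted from \cite[Proposition~2.3]{GrantMescher}, whose proof (going back, via \cite{FarberGrant}, to Schwarz's theory of the genus of a fibration) is exactly your plan of gluing the two partial sections into a lift $\tilde f\colon Y\to P_2X$ with $\pi_2\circ\tilde f=f$ and concluding $f^*u=\tilde f^*\pi_2^*u=0$. Your set-level construction is also correct: $\gamma_y$ does satisfy $r_1(\gamma_y)=f_1(y)$ and $r_2(\gamma_y)=f_2(y)$, the three clauses are compatible with the identifications, and $\pi_2\circ\tilde f=f$. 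The genuine gap is in the continuity verification: the description of ``basic neighbourhoods'' of $[p,2]$ that you use is false for the topology the paper puts on $P_2X$. Since $P_2X$ carries the quotient topology of the double mapping cylinder, a set $W\ni[p,2]$ is open as soon as its preimage in $(LX\times[1,2])\sqcup PX\sqcup PX$ is open; that preimage must contain $r_2^{-1}(V)\times\{2\}$ for some open $V\ni p$, but its height over this set may depend on $\gamma$, and no uniform collar need exist, because the fibres of $r_2$ are enormous (each $r_2^{-1}(p)$ is homeomorphic to the space of all paths from $p(0)$ to $p(1)$). Already for $X=\reals$ the saturated open set whose part in $LX\times[1,2]$ is
\begin{equation*}
  \Bigl\{(\gamma,t)\ :\ \sup\nolimits_s\,\abs{r_2(\gamma)(s)}<1,\ \ t>2-\bigl(1+\max\nolimits_s\abs{\gamma(s)}\bigr)^{-1}\Bigr\}
\end{equation*}
gives an open neighbourhood of $[c_0,2]$ (with $c_0$ the constant loop at $0$) containing no set of your claimed form.

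Moreover this defeats the argument itself, not just its justification: with that $W$, take $Y=\{y_0\}\cup\bigl((0,1]\times\reals\bigr)$ with the quotient metric obtained by collapsing $\{0\}\times\reals$ in $[0,1]\times\reals$ (so balls around $y_0$ are the uniform bands $\{s<\delta\}$), $f\equiv(0,0)$, $U_2=Y$ with $f_2\equiv c_0$, $U_1=Y\setminus\{y_0\}$ with $f_1(s,v)=\bigl(t\mapsto v\sin(\pi t)\bigr)$, and $\phi(s,v)=2-s$, $\phi(y_0)=2$; this $\phi$ does arise from a partition of unity and has exactly the two properties you impose, yet $\tilde f(s,v)\in W$ if and only if $s<1/(1+\abs{v})$, so no band maps into $W$ and $\tilde f$ is discontinuous at $y_0$. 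So ``$\phi\equiv1$ on $Y\setminus U_2$ and $\phi\equiv2$ on $Y\setminus U_1$'' is not enough. The standard repair is small: arrange instead that $\phi\equiv1$ on an \emph{open neighbourhood} of $Y\setminus U_2$ and $\phi\equiv2$ on an \emph{open neighbourhood} of $Y\setminus U_1$ (clamp a partition of unity $\phi_1+\phi_2=1$, e.g.\ $\phi:=1$ where $\phi_2\le\tfrac13$, $\phi:=2$ where $\phi_2\ge\tfrac23$, affine in between). Then the three open sets $\{\phi_2<\tfrac13\}\subseteq U_1$, $\{\phi_2>\tfrac23\}\subseteq U_2$ and $\{0<\phi_2<1\}\subseteq U_1\cap U_2$ cover $Y$, and on them $\tilde f$ equals $[f_1(\cdot),1]$, $[f_2(\cdot),2]$ and $[\gamma_{(\cdot)},\phi(\cdot)]$ respectively, each visibly a composite of continuous maps with the quotient map; continuity follows with no analysis of the quotient topology at all. (Alternatively one can use Milnor's coarse topology on the fibrewise join, where your coordinate neighbourhoods are open by definition, but then one must compare its cohomology with that of the paper's $P_2X$.) The numerability of the cover that you flag remains a genuine, but standard and implicitly assumed, hypothesis here as in \cite{FarberGrant} and \cite{GrantMescher}.
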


\section{Mayer-Vietoris on the Fiberwise Join}

In this section, we obtain information about the cohomology of $P_2X$ using a
Mayer-Vietoris decomposition. This follows the approach in \cite[Section
3]{GrantMescher} but simplifies the arguments and corrects a glitch in the
proof of \cite[Theorem 3.5]{GrantMescher}.

To obtain the Mayer-Vietoris sequence, we start with the obvious projection
$q\colon P_2X\to [1,2]; [\gamma,t]\mapsto t$, then set
\begin{equation*}
  Z(r_1):= q^{-1}\left(\left[1, \frac{3}{2}\right]\right)\subset P_2X,
  \qquad
  Z(r_2):= q^{-1}\left(\left[\frac{3}{2},2\right]\right)\subset P_2X
\end{equation*}
and use the
decomposition $P_2X = Z(r_1)\cup Z(r_2)$ with intersection $Z(r_1)\cap Z(r_2)
= LX\times \{\frac{3}{2}\}$ which we identify with $LX$. Note that $Z(r_1)$ and
$Z(r_2)$ indeed are the mapping cylinders of $r_1$ and $r_2$, respectively.
We denote with $\delta_{MV}$ the boundary map of the Mayer-Vietoris sequence
\begin{equation*}
  \xrightarrow{} H^1(P_2X;F) \xrightarrow{i_1^*\oplus i_2^*}
  H^1(Z(r_1);F)\oplus H^1(Z(r_2);F) \xrightarrow{ \iota_1^*
  -\iota_2^*} H^1(LX;F) \xrightarrow{\delta_{MV}} H^2(P_2X;F) \xrightarrow{}
\end{equation*}
associated to this decomposition.
Our main technical result then is the following.

We denote with $\delta_{MV}\colon H^1(LX;F) \to H^2(P_2X;F)$ the boundary map
of the Mayer-Vietoris sequence associated to this decomposition.

Our main technical result is then the following.
\begin{proposition}\label{prop:MV_cocycle}
  Let $X$ be a connected CW-complex, $A$ an abelian coefficient group. Let $c\in
  C^2_{\sing}(X;A)$ be a singular cocycle (i.e.~$\boundary(c)=0$ for the
  singular cochain boundary map $\partial$). Set
  \begin{equation*}
    \overline c:= \pr_2^*c-
    \pr_1^*c \in C^2_{\sing}(X\times X; A)
  \end{equation*}
  and assume that there is a cochain $b_c\in
  C^1_{\sing}(PX;A)$ such that $\partial(b_c) = \pi^*\overline c$.

  Then $a_c:= r_1^*b_c-r_2^*b_c\in C^1_{\sing}(LX;A)$ is a cocycle and
  \begin{equation*}
    \delta_{MV}([a_c]) = \pi_2^*([\overline c]) \in H^2(P_2X;A).
  \end{equation*}
\end{proposition}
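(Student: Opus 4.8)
The plan is to establish both claims directly on the cochain level, producing explicit cochains on the mapping cylinders $Z(r_1)$ and $Z(r_2)$ whose coboundaries assemble to $\pi_2^*\overline c$.

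First I would settle that $a_c$ is a cocycle. Since $\partial b_c=\pi^*\overline c$ and the singular coboundary commutes with pullback,
\[
  \partial a_c = r_1^*\partial b_c - r_2^*\partial b_c = (\pi\circ r_1)^*\overline c - (\pi\circ r_2)^*\overline c .
\]
A one-line computation from \eqref{eq:def_of_rj} gives $(\pi\circ r_1)(\gamma)=(\gamma(0),\gamma(\tfrac12))=(\pi\circ r_2)(\gamma)$ for every $\gamma\in LX$; note that it is exactly the orientation reversal of $\psi_2$ that makes the second coordinates agree at $\gamma(\tfrac12)$ rather than at $\gamma(1)$. Hence $\partial a_c=0$.

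For the identification of $\delta_{MV}([a_c])$ I would use the cochain recipe for the Mayer--Vietoris connecting map: choose $\alpha\in C^1_{\sing}(Z(r_1);A)$ and $\beta\in C^1_{\sing}(Z(r_2);A)$ with $\iota_1^*\alpha-\iota_2^*\beta=a_c$; then $\iota_1^*\partial\alpha-\iota_2^*\partial\beta=\partial a_c=0$, so $\partial\alpha$ and $\partial\beta$ agree on $LX$ and determine a cochain $w\in C^2_{\sing}(P_2X;A)$ with $i_1^*w=\partial\alpha$ and $i_2^*w=\partial\beta$, whose class is $\delta_{MV}([a_c])$. The key idea is to build $\alpha$ and $\beta$ by pulling $b_c$ back along the mapping-cylinder projections $\rho_j\colon Z(r_j)\to PX$ onto the target copies $PX\times\{j\}$, which satisfy $\rho_j\circ\iota_j=r_j$ and $\pi_2\circ i_j=\pi\circ\rho_j$.

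Setting $\alpha:=\rho_1^*b_c$ and $\beta:=\rho_2^*b_c$, the relation $\rho_j\circ\iota_j=r_j$ gives $\iota_1^*\alpha-\iota_2^*\beta=r_1^*b_c-r_2^*b_c=a_c$, so $(\alpha,\beta)$ is a valid lift; and $\partial\alpha=\rho_1^*\pi^*\overline c=(\pi\circ\rho_1)^*\overline c$, likewise for $\beta$. The compatibility $\pi_2\circ i_j=\pi\circ\rho_j$ --- checked on the two kinds of points of $Z(r_j)$, namely $[\gamma,t]\mapsto(\gamma(0),\gamma(\tfrac12))$ on the cylinder part and $\pi$ on $PX\times\{j\}$ --- then yields $i_j^*(\pi_2^*\overline c)=(\pi\circ\rho_j)^*\overline c$, which equals $\partial\alpha$ for $j=1$ and $\partial\beta$ for $j=2$. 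Thus $w=\pi_2^*\overline c$ satisfies the defining equations of the connecting map, and $\delta_{MV}([a_c])=\pi_2^*([\overline c])$.

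The one step deserving genuine care is the validity of this cochain recipe for the closed decomposition $P_2X=Z(r_1)\cup Z(r_2)$: the singular Mayer--Vietoris sequence is built from the small-chains subcomplex, so one must either thicken $Z(r_1),Z(r_2)$ to the open sets $q^{-1}([1,\tfrac32+\varepsilon))$ and $q^{-1}((\tfrac32-\varepsilon,2])$, which deformation retract onto them with intersection retracting onto $LX$, or note that the triad is excisive. I expect this bookkeeping to be the main obstacle; once it is in place, exhibiting the single global cochain $\pi_2^*\overline c$ and verifying the equations $i_1^*(\pi_2^*\overline c)=\partial\alpha$ and $i_2^*(\pi_2^*\overline c)=\partial\beta$ directly sidesteps any abstract appeal to the existence and uniqueness of $w$.
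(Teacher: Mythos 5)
Your proposal is correct and is essentially the paper's own argument: the same cochain-level lift (your $\rho_j^*b_c$ are exactly the paper's $\pr_j^*b_c$ along the mapping-cylinder projections), the same relations $\rho_j\circ\iota_j=r_j$ and $\pi_2\circ i_j=\pi\circ\rho_j$, and the same identification of the connecting homomorphism on $[a_c]$ with the class of the global cocycle $\pi_2^*\overline c$. Your closing caveat about the closed cover is actually a point of \emph{extra} care relative to the paper, which treats the cochain-level Mayer--Vietoris sequence for $P_2X=Z(r_1)\cup Z(r_2)$ as short exact even though $i_1^*\oplus i_2^*$ fails to be injective (a cochain can be nonzero only on simplices straddling both pieces); your thickening/excisiveness remark, combined with the observation that $\pi_2^*\overline c$ is a genuine global cocycle restricting correctly to small chains, is precisely what makes that step airtight.
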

\begin{proof}
  First observe that $\pi\circ r_1=\pi\circ r_2$, hence by naturality
  \begin{equation*}
    \boundary(a_c) =r_1^*(\boundary b_c)-r_2^*(\boundary b_c) =
    r_1^*\pi^*\overline c - r_2^*\pi^*\overline c =0.
  \end{equation*}

  To compute the Mayer-Vietoris boundary map, we use its snake lemma
  definition applied to the Mayer-Vietoris short exact sequence of singular cochain groups
  {\small
    \begin{equation*}
  0 \to C^*_{\sing}(P_2X;A) \xrightarrow{i_1^*\oplus i_2^*}
  C^*_{\sing}(Z(r_1);A)\oplus C^*_{\sing}(Z(r_2);A) \xrightarrow{ \iota_1^*
    -\iota_2^*} C^*_{\sing}(LX;A) \to 0.
\end{equation*}
}

Specifically, given the cocycle $a_c\in C^1_{\sing}(LX;A)$, we need to find
  cochains $(\beta_1,\beta_2)\in C^1_{\sing}(Z(r_1);A)\oplus
  C^1_{\sing}(Z(r_2);A)$ such that $\iota_1^*\beta_1 - \iota_2^*\beta_2 =
  a_c$, where $\iota_k\colon LX\to Z(r_k)$ are the
  inclusion maps as end of the mapping cylinders.

  In our case, we let $\pr_k\colon Z(r_k)\to PX$ be the projection to the
  other end of the mapping cylinder (the standard homotopy equivalence) and
  use the cocycles
  \begin{equation*}
(\pr_1^*b_c,\pr_2^*b_c)\in C^1_{\sing}(Z(r_1);A)\oplus
  C^1_{\sing}(Z(r_2);A).
\end{equation*}
As we have $\pr_k\circ\iota_k = r_k$, by naturality we indeed get
\begin{equation*}
\iota_1^*\pr_1^*b_c- \iota_2^*\pr_2^*b_c= r_1^*b_c-r_2^*b_c = a_c.
\end{equation*}

We next have to compute the coboundary of $\pr_k^*b_c$. By naturality of the
coboundary map we obtain
  \begin{equation*}
    \boundary(\pr_k^*b_c) = \pr_k^*(\boundary b_c) = \pr_k^*\pi^*\overline c =
    i_k^*(\pi_2^* \overline c).
  \end{equation*}
Here, $i_k\colon Z(r_k)\to P_2X$ is the natural inclusion and the final
equality follows because upon composition with the endpoint projection to
$X\times X$, all relevant maps become equal: $\pi_2 \circ i_k = \pi\circ
\pr_k\colon Z(r_k)\to X\times X$. Now recall that $i_2^*\oplus i_2^*$ is the
inclusion map of the short exact sequence of singular cochain groups
{\small
  \begin{equation*}
  0 \to C^*_{\sing}(P_2X;A) \xrightarrow{i_1^*\oplus i_2^*}
  C^*_{\sing}(Z(r_1);A)\oplus C^*_{\sing}(Z(r_2);A) \xrightarrow{ \iota_1^*
    -\iota_2^*} C^*_{\sing}(LX;A) \to 0.
\end{equation*}
}
We just saw that
\begin{equation*}
  (i_1^*\oplus i_2^*) (\pi_2^* \overline c) =\boundary(\pr_1^*b_c,\pr_2^*b_c),
\end{equation*}
hence by the snake lemma indeed
\begin{equation*}
  \delta_{MV}([a_c]) =\pi_2^*[\overline c]
\end{equation*}
which is what we have to prove.
\end{proof}

\begin{remark}
  The glitch in the proof of \cite[Theorem 3.5]{GrantMescher} we mentioned
  above is the
  application of the ``snake lemma'' to a sequence of singular cochain
  complexes
  {\small
  \begin{equation*}
    0 \to C^*_{\sing}(P_2M;\reals)\xrightarrow{i_1^*\oplus i_2^*}
    C^*_{\sing}(\mathcal{P}M;\reals)\oplus C^*_{\sing}(\mathcal{P}M;\reals)
    \xrightarrow{r_1^*-r_2^*} C^*_{\sing}(\Lambda M;\reals)\to 0
  \end{equation*}
  }
  which however is \emph{not} an exact sequence of cochain complexes but only chain
  homotopy equivalent to one. In general, the calculation of the boundary map
  of the associated long exact sequence via homotopy equivalent replacements
  is not guaranteed to work. With a stroke of luck, in our case the idea
  works however, since the relevant classes pull back from $M\times M$ and
  the chain homotopy equivalences are compatible with the maps to $M\times M$.
\end{remark}

\section{The TC-Weight of an atoroidal Cohomology Class}

We now want to combine Propositions \ref{prop:TC_crit} and
\ref{prop:automatic_atoroidal} to bound the TC-weight of our atoroidal classes
from below. As a tool, we transport singular (co-)chains between a
space and its path space using the exponential adjunction, whose properties we
first collect in the following subsection. These results should be well known
and are elementary, we supply them for completeness of the exposition.

\subsection{Singular Chains of a Space and its Path Space}

\begin{definition}\label{def:curring}
  Let $X$ be a topological space and $A$ a coefficient group. We define the transformation
  \begin{equation*}
    b\colon C^*_{\sing}(X;A) \to C_{\sing}^{*-1}(PX;A); c\mapsto b_c
  \end{equation*}
  setting 
  \begin{equation*}
    b_c:= c\circ T\colon C_1^{\sing}(PX)\xrightarrow{T} C_2^{\sing}(X)\xrightarrow{c} A,
  \end{equation*}
  where $T$ is defined via the exponential law as follows.
  Let
  \begin{equation*}
    \Psi\colon C(\Delta_k, PX) \to C(\Delta_k\times [0,1],X) 
  \end{equation*}
  be the inverse map of the tensor-hom adjunction given by
  \begin{equation*}
    \Psi f (x,s):= (\sigma(x))(s);\qquad \forall \sigma\colon \Delta_k\to PX,\,x\in \Delta_k,\, s\in [0,1].
  \end{equation*}
  Here, $\Delta_k$ is the standard k-simplex spanned by vertices $v_0,\dots,v_k$.
  For $j=0,\dots,k$ let $\iota_j\colon \Delta_{k+1}\to \Delta_k\times [0,1]$
  be the standard embeddings decomposing $\Delta_k\times [0,1]$ 
  into $(k+1)$-dimensional simplices. Explicitly, these are the affine linear maps with
  \begin{equation}\label{eq:decomp_incl}
    \iota_j(v_i) =
    \begin{cases}
      (v_i,0); & i\le j\\
      (v_{i-1},1); & i>j.
    \end{cases}
  \end{equation}
  We then define
  \begin{equation*}
    T(\sigma\colon \Delta_1\to PX):= \Psi(\sigma)\circ \iota_0  -
    \Psi(\sigma)\circ \iota_1.
  \end{equation*}
\end{definition}

A crucial property is the relation between the singular coboundary maps and
the transformation $b$, which reads as follows:

\begin{lemma}
  In the situation of Definition \ref{def:curring}, let $\pi\colon PX\to
  X\times X$ be the endpoint evaluation map. For $c\in
  C^{k}_{\sing}(X;A)$ then
  \begin{equation}\label{eq:boundary_and_b}
   \boundary (b_c) =-  b_{\boundary c}  + \pi^*\pr_2^*c -\pi^*\pr_1^*c.
 \end{equation}
  Define $\overline
  c:=\pr_2(c) - \pr_1(c)\in C^2_{\sing}(X\times X;A)$. If $c$ is a cocycle,
  i.e.~$\boundary c=0$, we then get as short version of Equation
  \eqref{eq:boundary_and_b} that
  \begin{equation*}
    \partial(b_c) =\pi^*\overline{c}.
  \end{equation*}
\end{lemma}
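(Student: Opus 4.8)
The map $T$ is precisely the classical prism operator from the proof of homotopy invariance of singular homology, transported to $PX$ through the uncurrying adjunction $\Psi$: in each dimension it sends a singular simplex $\sigma\colon\Delta_n\to PX$ to the subdivision $\sum_{j=0}^{n}(-1)^j\Psi(\sigma)\circ\iota_j$ of the prism $\Delta_n\times[0,1]$ into $(n+1)$-simplices via the embeddings \eqref{eq:decomp_incl}. The plan is to establish the prism chain-homotopy identity for $T$ and then simply dualize it against $c$.

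First I would prove the identity
\begin{equation*}
  \partial(T\tau) = (\pr_2\circ\pi\circ\tau) - (\pr_1\circ\pi\circ\tau) - T(\partial\tau)
\end{equation*}
for every singular simplex $\tau\colon\Delta_k\to PX$, as an equation of singular $k$-chains in $X$. The two endpoint terms appear because, by the definition of $\Psi$, the restriction of $\Psi(\tau)$ to the bottom face $\Delta_k\times\{0\}$ is $\pr_1\circ\pi\circ\tau$ and its restriction to the top face $\Delta_k\times\{1\}$ is $\pr_2\circ\pi\circ\tau$. This is the step I expect to be the main obstacle: one must expand each $\partial(\Psi(\tau)\circ\iota_j)$ using the simplicial boundary and check, from the combinatorics of \eqref{eq:decomp_incl}, that the interior faces shared by adjacent prism simplices cancel in pairs, leaving only the top face (dropping $v_0$ from $\iota_0$, with sign $+$), the bottom face (dropping $v_{k+1}$ from $\iota_k$, with sign $-$), and the remaining faces, which reassemble into $-T(\partial\tau)$. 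The sign bookkeeping is the delicate point, but it is exactly the cancellation behind the standard prism lemma $\partial P = g_{*}-f_{*}-P\partial$, so I would either invoke that or reproduce the pairing of faces explicitly.

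With the prism identity in hand, the lemma follows formally. For $c\in C^k_{\sing}(X;A)$ and a singular $k$-simplex $\tau$ in $PX$, solving the identity for $T(\partial\tau)$ and applying $c$ gives
\begin{align*}
  \partial(b_c)(\tau) &= b_c(\partial\tau) = c\big(T(\partial\tau)\big)\\
  &= c(\pr_2\circ\pi\circ\tau) - c(\pr_1\circ\pi\circ\tau) - c\big(\partial(T\tau)\big).
\end{align*}
By the very definition of pullback of cochains the first two terms are $(\pi^*\pr_2^*c)(\tau)$ and $(\pi^*\pr_1^*c)(\tau)$, while the last equals $(\partial c)(T\tau)=b_{\partial c}(\tau)$. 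Reading off coefficients yields Equation \eqref{eq:boundary_and_b} verbatim. Finally, if $c$ is a cocycle then $b_{\partial c}=0$ and the right-hand side collapses to $\pi^*\pr_2^*c-\pi^*\pr_1^*c=\pi^*(\pr_2^*c-\pr_1^*c)=\pi^*\overline c$, which is the asserted short form.
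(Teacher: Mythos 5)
Your proposal is correct and follows essentially the same route as the paper: both rest on the decomposition of the prism $\Delta_k\times[0,1]$ into simplices via the $\iota_j$, with interior faces cancelling in pairs, top and bottom faces producing $\pi^*\pr_2^*c-\pi^*\pr_1^*c$, and side faces producing $-b_{\partial c}$. Your observation that $T$ is literally the classical prism operator for the tautological homotopy $(\gamma,s)\mapsto\gamma(s)$ between the two endpoint evaluations, so the chain-level identity can be quoted from the standard proof of homotopy invariance rather than re-derived, is a clean way to discharge the sign bookkeeping that the paper explicitly leaves to the reader.
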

\begin{proof}
  This is an elementary and direct consequence of the definition of $b_c$ and
  the simplicial coboundary: given $\sigma\colon \Delta_k\to PX$, the formula
  for $\boundary(b_c)(\sigma)=b_c(\boundary\sigma)$ is a sum over the
  different boundary components of $\sigma$ and the different summands in
  $b_c$ coming from the decomposition of $\Delta_{k-1}\times [0,1]$ into
  k-simplices. In this sum, one obtains ``interior'' contributions of
  k-simplices mapped to the interior of $\Delta_k\times [0,1]$. These occur in
  pairs which cancel out. Additionally, one obtains a top and a bottom
  contribution, corresponding to $\Delta_k\times \{1\}$ and $\Delta_k\times
  \{0\}$ giving rise to the last two summands in Equation
  \eqref{eq:boundary_and_b}. Finally, one obtains contributions of k-simplices
  mapped to $(\boundary(\Delta_k) )\times [0,1]$ which result precisely in the
  first summand. Writing out the elementary formulas is left to the reader.
\end{proof}

\begin{proposition}\label{prop:vanish_ac}
  Let $X$ be a topological space, $A$ an abelian coefficient group and  $c\in
  C^2_{\sing}(X;A)$ a cocycle representing an atoroidal cohomology
  class. Recall that this means  that $f^*[c]=0\in H^2(T^2;A)$ for every continuous map $f\colon
  T^2\to X$.

  Then, for $a_c:=r_1^*b_c-r_2^*b_c\in C^1_{\sing}(LX;A)$ as in
  Proposition \ref{prop:MV_cocycle} we have
  \begin{equation}\label{eq:ac_zero}
    [a_c]=0 \in H^1(LX;A).
  \end{equation}
  In particular, then also $\pi_2^*\overline{c} = 0 \in H^2_{\sing}(P_2X;A)$
  and $\overline{c}\in H^2(X\times X;A)$ has TC-weight $\wgt(\overline{c})=2$.
\end{proposition}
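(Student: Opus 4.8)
The plan is to establish the three assertions in turn, the vanishing $[a_c]=0$ being the heart of the matter. Since $a_c$ is a $1$-cocycle on $LX$ (Proposition~\ref{prop:MV_cocycle}), the universal coefficient theorem identifies its class $[a_c]\in H^1(LX;A)$ with the homomorphism $H_1(LX;\integers)\to A$ it induces, the $\mathrm{Ext}$-term vanishing because $H_0$ is free. By the Hurewicz theorem $H_1(LX;\integers)$ is generated by the images $\ell_*[S^1]$ of loops $\ell\colon S^1\to LX$, so it suffices to prove $\langle[a_c],\ell_*[S^1]\rangle=0$ for every such $\ell$. The decisive point is that, under the exponential law underlying Definition~\ref{def:curring}, a loop $\ell\colon S^1\to LX$ is precisely a map from the torus $F\colon T^2=S^1\times S^1\to X$, $F(t,s)=\ell(t)(s)$; here the fact that $\ell$ lands in the \emph{loop} space $LX$ rather than merely $PX$ is exactly what makes $F$ close up in the $s$-direction into a torus.

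Next I would compute the pairing at the chain level. Writing $z$ for a singular $1$-cycle generating $H_1(S^1;\integers)$, naturality gives $\langle a_c,\ell_*z\rangle=\langle b_c,(r_1\circ\ell)_*z\rangle-\langle b_c,(r_2\circ\ell)_*z\rangle$. Recalling from Definition~\ref{def:curring} that $b_c=c\circ T$, where $T$ adjoints a simplex in $PX$ to a family of paths and triangulates the prism $\Delta_\bullet\times[0,1]$ into simplices of $X$, the term $\langle b_c,(r_1\circ\ell)_*z\rangle$ is the value of $c$ on the singular $2$-chain that $F$ sweeps out over $S^1\times[\tfrac{0}{1},\tfrac12]$ (the reparametrisation $\psi_1$ preserving orientation), while $\langle b_c,(r_2\circ\ell)_*z\rangle$ is its value over $S^1\times[\tfrac12,1]$ with the $s$-direction reversed, since $\psi_2$ reverses orientation. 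The explicit minus sign in $a_c=r_1^*b_c-r_2^*b_c$ combines with this reversal so that the two chains \emph{add} rather than cancel, and, because $F$ closes up in $s$, they assemble into a singular cycle representing $[T^2]$. Hence $\langle a_c,\ell_*z\rangle=\langle F^*[c],[T^2]\rangle$. I expect this identification to be the main obstacle: one must track the simplicial decomposition of $\Delta_1\times[0,1]$ built into $T$ against the two reparametrisations, checking that the interior faces cancel in pairs and that the outer $s=0$ and $s=1$ faces are identified in $T^2$, so that the signs assemble correctly into the fundamental class.

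Granting this, atoroidality of $[c]$ gives $F^*[c]=0\in H^2(T^2;A)$, whence $\langle[a_c],\ell_*[S^1]\rangle=0$ for every loop and therefore $[a_c]=0$, which is \eqref{eq:ac_zero}. The remaining claims are then formal: Proposition~\ref{prop:MV_cocycle} gives $\delta_{MV}([a_c])=\pi_2^*[\overline c]$, so $[a_c]=0$ forces $\pi_2^*[\overline c]=0\in H^2(P_2X;A)$, and Proposition~\ref{prop:TC_crit} yields $\wgt(\overline c)\ge 2$. For the matching upper bound (in force whenever $\overline c\ne 0$, e.g.\ when $[c]\ne 0$) I would invoke the standard fact from \cite{FarberGrant} that the $\TC$-weight of a nonzero class never exceeds its cohomological degree: the fibre of the $k$-fold fibrewise join $p_k\colon P_kX\to X\times X$ is the $k$-fold join of the fibre of $\pi$, hence $(k-2)$-connected, so $p_k^*$ is injective below degree $k$; as $\overline c$ has degree $2$ this gives $p_3^*\overline c\ne 0$, i.e.\ $\wgt(\overline c)\le 2$. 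Together with the lower bound this yields $\wgt(\overline c)=2$.
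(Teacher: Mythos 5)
Your proposal is correct and follows essentially the same route as the paper's proof: reduce via the universal coefficient theorem and the Hurewicz map to evaluating $a_c$ on loops $\ell\colon S^1\to LX$, convert such a loop by the exponential law into a torus map $F\colon T^2\to X$, verify at the chain level (the orientation reversal in $\psi_2$ making the two half-cylinders add up to a fundamental cycle of $T^2$) that $\langle a_c,\ell_*[S^1]\rangle = \langle F^*[c],[T^2]\rangle$, and conclude by atoroidality --- the paper carries out the sign/decomposition bookkeeping you defer by writing down an explicit table of four affine $2$-simplices forming the fundamental cycle. You in fact go slightly beyond the paper, whose proof only establishes $\wgt(\overline c)\ge 2$ via Propositions \ref{prop:MV_cocycle} and \ref{prop:TC_crit}, by also supplying the matching upper bound $\wgt(\overline c)\le 2$ (for $\overline c\ne 0$) from the connectivity of the fiberwise join.
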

\begin{proof}
  The last statement follows from \eqref{eq:ac_zero}, namely that the cohomology class of $a_c$
  vanishes, together with Propositions \ref{prop:MV_cocycle} and
  \ref{prop:TC_crit}.

  It remains to show that $[a_c]=0\in H^1_{\sing}(LX;A)$. Now, in degree $1$
  the universal coefficient theorem gives the isomorphism
  \begin{equation*}
      H_{\sing}^1(LX;A) \iso \Hom(H^{sing}_1(X;\integers),A).
  \end{equation*}
  By the Hurewicz
  homomorphism it therefore suffices to show that
  \begin{equation*}
    a_c(\sigma\colon
    [0,1]\to LX)=0 \quad\forall \sigma\colon [0,1]\to LX \text{ with }
    \sigma(0)=\sigma(1),
  \end{equation*}
  i.e.~for the evaluation of $a_c$ on the image of the
  fundamental class of $S^1$ under the map $\overline \sigma\colon S^1\to
  LX$ defined by $\sigma$. Here and later, we use the identifications $[0,1]\iso\Delta_1$ and on
  the other hand $S^1= [0,1]/\sim$ with the equivalence relation $\sim$ generated by $0\sim 1$.

  To conveniently write down a formula for $a_c(\sigma)$ we apply the tensor-hom adjunction and define
  \begin{equation*}
    \Psi(\sigma)\colon T^2=S^1\times S^1\to X; (t,s)\mapsto \overline \sigma(t)(s).
  \end{equation*}

  Now we obtain in a straightforward way from the  definitions of our maps that
  \begin{equation}\label{eq:compute_ac}
    a_c(\sigma\colon [0,1]\to LX) =(\Psi(\sigma)^*c)(\tau)
  \end{equation}
  where $\tau\in C_2^{\sing}(T^2)$ is the signed sum of four singular simplices
  \begin{equation*}
    \tau = \sum_{j=1}^2\sum_{k=0}^1 (-1)^{j+1+k} (\sigma_{j,k}\colon\Delta_2\to T^2).
  \end{equation*}
   Here   $   \sigma_{j,k}\colon \Delta_2\to [0,1]/\sim\times[0,1]/\sim$
  is the affine linear map defined by sending the vertices $(v_0,v_1,v_2)$ of
  $\Delta_2$ to the following image points:
  \begin{center}
    \begin{tabular}{c|llll}
      $(j,k)$ & $(1,0)$& $(1,1)$& $(2,0)$& $(2,1)$\\\hline
      $v_0$ & $(0,0)$ & $(0,0)$ & $(0,1)$ & $(0,1)$\\
      $v_1$ & $(0,\frac{1}{2})$& $(1,0)$ & $(0,\frac{1}{2})$ & $(1,1)$\\
      $v_2$& $(1,\frac{1}{2})$ & $(1,\frac{1}{2})$ & $(1,\frac{1}{2})$& $(1,\frac{1}{2})$\\
    \end{tabular}
  \end{center}

    It is straightforward to see that $\tau$ is a singular cycle, indeed it is
    a fundamental cycle of $T^2$.

    By assumption, the pullback of the cohomology class represented by $c$
    along any map from $T^2$ to $X$ vanishes, in particular also
    $[\Psi(\sigma)^*c]=0\in H^2_{\sing}(T^2;A)$, and hence
    \begin{equation*}
      (\Psi^*(\sigma)(c))(\tau)=0,
    \end{equation*}
    as $\tau$ is a singular cycle. But by equation \eqref{eq:compute_ac} this
    is precisely what we have to establish to conclude the proof of
    Proposition \ref{prop:vanish_ac}.
\end{proof}

\section{Proof of the Main Theorems}

At this point, the proof of our main Theorems \ref{theo:main} and \ref{theo:specialmain} is
rather straightforward and follows the same lines as \cite[Section 4]{GrantMescher}. 

\begin{proof}[Proof of Theorem \ref{theo:main}]
  Assume we have the connected CW-complex $X$ of dimension $2n$, the coefficient field
  $F$ and the atoroidal cohomology class $u\in H^2(X;F)$ with $u^n\ne 0$.

  Form $\overline u:=\pi_1^*u-\pi_2^*u\in H^2(X\times X;F)$. Consider
  \begin{equation*}
    \overline u^{2n}\in H^{4n}(X\times X;F)\iso \bigoplus_{j=0}^{4n}
    H^j(X;F)\tensor H^{2n-j}(X;F).
  \end{equation*}
  Then the component of
  $\overline u^{2n}$ in the summand $H^{2n}(X;F)\tensor H^{2n}(X;F)$ is
  \begin{equation*}
    (-1)^n \binom{2n}{n} u^n \tensor u^n.
  \end{equation*}
  Now by assumption $u^n\ne 0$ and if $\characteristic(F)>2n$ or $\characteristic(F)=0$ also
  $\binom{2n}{n}\ne 0$ and consequently also $\overline u^n\ne 0$.

  By Proposition \ref{prop:vanish_ac}, $\wgt(\overline u)\ge 2$ and by Theorem
  \ref{theo:wgt_rules} $\wgt(\overline u^{2n})\ge 2n$ and therefore $\TC(X)\ge 4n$.
  Hence, $\TC(X)=4n$.
\end{proof}

\begin{proof}[Proof of Theorem \ref{theo:specialmain}] 
   Given the connected CW-complex $X$ of dimension $2n$, the coefficient field
   $F$ and the atoroidal
   cohomology classes $u_1,\dots,u_n\in H^2(X;F)$ such that $u_j^2=0$ and
   $u_1\cup\dots\cup u_n\ne 0\in H^{2n}(X;F)$, consider
   \begin{equation*}
     \overline u_j:=
     \pi_1^*u_j-\pi_2u_j\in H^2(X\times X;F)
   \end{equation*}
   and
   \begin{equation*}
     \begin{split}
       \overline u_1^2\cup\dots\cup
       \overline u_n^2 =&  (-2u_1\tensor u_1) \cup \dots \cup (-2u_n\tensor u_n)
       \\
       =&
         (-2)^n (u_1 \cup \dots \cup u_n)\tensor (u_1 \cup \dots \cup u_n)\\
       & \in H^{2n}(X;F)\tensor
       H^2(X;F) \subset H^{4n}(X\times X;F),
     \end{split}
   \end{equation*}
   where we use that $u_j^2=0$ for $j=1,\dots,n$. By assumption on
$\characteristic(F)\ne 2$ and $u_1 \cup \dots \cup u_n\ne 0$, we conclude that
   $\overline u_1^2 \cup \dots \cup \overline u_n^2\ne 0\in H^{4n}(X\times X;F)$.

   By Proposition \ref{prop:vanish_ac}, $\wgt(\overline u_j)\ge 2$ for $j\in
   \{1,\dots,n\}$ and by Theorem 
  \ref{theo:wgt_rules} $\wgt(\overline u_1^{2}\cup\dots\cup \overline u_n^2)
  \ge 2n$ and $\TC(X)\ge 4n$. Hence, $\TC(X)=4n$.
\end{proof}

\section{Applications and Examples}

\begin{definition}
  Let $X$ be a connected CW-complex. We call $X$ \emph{$2$-aspherical} if its
  universal covering $\tilde X$ is $2$-connected, i.e.~if $\pi_2(X)=0$.

  We call a cohomology class $u\in H^k(X;A)$ \emph{aspherical} if for every
  continuous map $f\colon S^k\to X$ the pullback satisfies $f^*u=0$. 
\end{definition}

Note that because of the existence of a degree one map $T^k\to S^k$ (in
particular, such that the induced map in degree $k$ cohomology is injective),
every atoroidal cohomology class is also aspherical. And observe that on a
$2$-aspherical space every degree $2$ cohomology class is aspherical.

We have the following partial converse of the last property:
\begin{proposition}\label{prop:automatic_atoroidal}
  Let $X$ be a connected CW-complex and let $u\in H^2(X;F)$ be an aspherical
  cohomology class, where $\characteristic(F)=0$. If $\pi_1(X)$ does not
  contain a subgroup
  isomorphic to $\pi_1(T^2)\iso \integers^2$ then $u$ is also atoroidal.

If $\pi_1(X)$ is torsion-free and does not contain a subgroup isomorphic to
$\integers^2$, the same conclusion also holds for an arbitrary coefficient
field $F$.
\end{proposition}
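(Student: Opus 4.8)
The plan is to reduce the atoroidal condition to a statement about $\pi:=\pi_1(X)$ and then exploit asphericity to push the class down to the classifying space $B\pi := K(\pi,1)$. For a map $g\colon T^2\to X$ the pullback $g^*u\in H^2(T^2;F)\iso F$ is detected by its value on the fundamental class, so it suffices to control $g^*u$ through the induced homomorphism $g_*\colon \pi_1(T^2)\iso\integers^2\to\pi$.

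First I would show that every aspherical class $u\in H^2(X;F)$ is pulled back along the classifying map $c\colon X\to B\pi$ of the universal cover, i.e.\ $u=c^*v$ for some $v\in H^2(B\pi;F)$. The homotopy fibre of $c$ is the universal cover $\widetilde X$, which is simply connected, so $H^1(\widetilde X;F)=0$. In the Serre spectral sequence of $\widetilde X\to X\xrightarrow{c}B\pi$ this forces $E_2^{1,1}=H^1(\pi;H^1(\widetilde X;F))=0$, and a short check shows $E_\infty^{0,2}=E_2^{0,2}=H^2(\widetilde X;F)^\pi$; hence the image of the inflation $H^2(B\pi;F)\to H^2(X;F)$ is exactly the kernel of the restriction-to-fibre map $i^*\colon H^2(X;F)\to H^2(\widetilde X;F)$. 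To see $i^*u=0$ I would use simple-connectivity of $\widetilde X$: by the universal coefficient theorem $H^2(\widetilde X;F)\iso\Hom(H_2(\widetilde X;\integers),F)$, and by Hurewicz $H_2(\widetilde X;\integers)\iso\pi_2(\widetilde X)=\pi_2(X)$ is generated by classes $\widetilde\phi_*[S^2]$ with $\widetilde\phi\colon S^2\to\widetilde X$. Pairing $i^*u$ with such a generator gives $\langle u,(i\circ\widetilde\phi)_*[S^2]\rangle=\langle(i\circ\widetilde\phi)^*u,[S^2]\rangle$, which vanishes since $u$ is aspherical. Thus $i^*u=0$ and $u=c^*v$.

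Next I would use that $T^2$ is itself a classifying space, $T^2=K(\integers^2,1)$, so that $c\circ g\colon T^2\to B\pi$ is homotopic to $B\rho$ for $\rho:=(c\circ g)_*\colon\integers^2\to\pi$. Factoring $\rho$ as $\integers^2\xrightarrow{q}H\xrightarrow{\iota}\pi$ with $H:=\im\rho$ and $q$ surjective, we obtain $g^*u=(c\circ g)^*v=(Bq)^*(B\iota)^*v$ with $(B\iota)^*v\in H^2(BH;F)$. Since $H$ is a quotient of $\integers^2$ that embeds in $\pi$, the hypothesis that $\pi$ contains no subgroup isomorphic to $\integers^2$ forces $H\not\iso\integers^2$, so $\ker q\ne 0$ and $H$ has torsion-free rank at most $1$. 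It then remains to verify $H^2(BH;F)=0$ in each admissible case: in characteristic $0$ the group $H$ is either finite (so $H^2(BH;F)=0$) or isomorphic to $\integers\oplus\integers/m$ (so $H^*(BH;F)\iso H^*(S^1;F)$ and $H^2=0$); in the torsion-free case $H$ is free of rank $\le 1$, whence $BH$ is a point or $S^1$ and again $H^2(BH;F)=0$. In all cases $(B\iota)^*v=0$ and therefore $g^*u=0$, so $u$ is atoroidal.

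The main obstacle is the first step: showing that an aspherical degree-$2$ class descends to $B\pi$. The hypothesis only controls pullbacks to $S^2$, and converting this into vanishing on the universal cover (via Hurewicz and the universal coefficient theorem) together with the spectral-sequence bookkeeping — in particular the vanishing of $E_2^{1,1}$ coming from simple-connectivity of $\widetilde X$ — is the heart of the argument. Once the class is known to come from $B\pi$, the conclusion is a routine computation of $H^2$ of quotients of $\integers^2$, organised by the characteristic of $F$ and the torsion-freeness of $\pi$.
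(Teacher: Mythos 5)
Your proof is correct, but it takes a genuinely different route from the paper. The paper's proof is essentially a citation: it invokes (and patches, in a footnote) the argument from the proof of Borat's theorem, in which one shows that if $\pi_1(f)\colon \integers^2\to\pi_1(X)$ kills a \emph{primitive} element, then $f\colon T^2\to X$ factors up to homotopy through $S^2\vee S^1$, so asphericity of $u$ forces $f^*u=0$; the case where the kernel is nonzero but contains no primitive element (possible only when $\pi_1(X)$ has torsion) is handled there by precomposing with a finite covering $p\colon T^2\to T^2$ and using that $p^*$ is injective on $H^2(T^2;F)$ when $\characteristic(F)=0$. You instead descend the class to the classifying space: asphericity plus the Serre spectral sequence of $\widetilde X\to X\to B\pi_1(X)$ gives $u=c^*v$, and then $g^*u$ factors through $H^2(BH;F)$ for $H$ a proper quotient of $\integers^2$, which vanishes by an elementary group-cohomology computation. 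Your approach is more self-contained (no covering trick, no external reference), treats both cases of the proposition uniformly, and makes transparent exactly where each hypothesis enters: $\characteristic(F)=0$ is needed precisely so that $H^2(B(\integers\oplus\integers/m);F)=0$ when torsion quotients of $\integers^2$ can occur, while torsion-freeness of $\pi_1(X)$ forces $H$ to be trivial or infinite cyclic, so any field works. The paper's route, by contrast, avoids all spectral-sequence machinery. One small inaccuracy in your write-up: the claim $E_\infty^{0,2}=E_2^{0,2}$ is not justified in general, since there may be a nonzero differential $d_3\colon E_3^{0,2}\to E_3^{3,0}$ landing in a quotient of $H^3(\pi_1(X);F)$; but this claim is also not needed, because the identity (image of inflation) $=$ (kernel of restriction to the fibre) in degree $2$ already follows from $E_\infty^{1,1}=E_2^{1,1}=0$ together with the standard description of the two edge homomorphisms.
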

\begin{proof}
  By the proof\footnote{To be precise, the proof of \cite[Theorem 3]{Borat}
    has a gap: it only discusses the case where the kernel of $\pi_1(f)$
    contains a primitive element (i.e.~an element not divisible by any
    positive integer). This is automatically the case if $\pi_1(X)$ is
    torsion-free and the kernel is non-trivial. The general case follows by replacing $f$
    with the composition $f\circ p$ with a suitable finite covering projection
    $p\colon T^2\to T^2$ such that the kernel of $\pi_1(f\circ p)$ indeed
    contains a primitive element. It is here that we need
    $\characteristic(F)=0$: this implies that 
  the induced map $p^*\colon H^2(T^2;F)\to H^2(T^2;F)$ is injective.} of \cite[Theorem
  3]{Borat} if $f\colon T^2\to X$ is not injective, then under the conditions
  we have imposed  it holds that $f^*u=0$.
\end{proof}

\begin{proposition}\label{prop:2-aspherical_pi1}
  Let $X$ be a connected $2n$-dimensional $2$-aspherical CW-complex with
  cohomology 
  class $u\in H^2(X;F)$ such that $u^n\ne 0\in H^{2n}(X;F)$ for $F$ a
  coefficient field.

Assume that one of the following conditions is satisfied:
\begin{itemize}
\item $\characteristic(F)=0 $ and $\pi_1(X)$ does not contain a subgroup isomorphic to
  $\integers^2$, for example $\pi_1(X)$ is Gromov hyperbolic;
\item   $\characteristic(F)>2n$ and $\pi_1(X)$ is torsion free and does not contain a subgroup isomorphic to
  $\integers^2$

\end{itemize}

  Alternatively, assume that $\characteristic(F)\ne 2$ and that we have classes
  $u_1,\dots,u_n\in H^2(X;F)$ with $u_i^2=0$ and $u_1\cup\dots\cup u_n\ne 0
  \in H^{2n}(X;F)$.

    Then $\TC(X)=4n$.

\smallskip

  Without any assumption on the dimension of $X$, we can at least conclude that
  $\TC(X)\ge 4n$.
\end{proposition}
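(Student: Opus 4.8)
The plan is to reduce everything to the two main theorems (Theorem \ref{theo:main} and Theorem \ref{theo:specialmain}) by manufacturing the \emph{atoroidal} classes they require out of the \emph{aspherical} ones we are handed; the bridge is Proposition \ref{prop:automatic_atoroidal}, so the whole argument is essentially a bookkeeping exercise matching up the characteristic and $\pi_1$ hypotheses.

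First I would use $2$-asphericity to make every degree-$2$ class aspherical: this is exactly the observation recorded just before Proposition \ref{prop:automatic_atoroidal}, since a map $S^2\to X$ lifts to the $2$-connected universal cover, where it is nullhomotopic, so the map itself is nullhomotopic and annihilates $H^2$. Hence $u$ — and, in the alternative setup, each $u_j$ — is aspherical. I then feed this into Proposition \ref{prop:automatic_atoroidal}: under $\characteristic(F)=0$ with no $\integers^2$-subgroup in $\pi_1(X)$, or under $\pi_1(X)$ torsion-free with no $\integers^2$-subgroup (valid for any $F$, in particular for $\characteristic(F)>2n$ or $\characteristic(F)\ne 2$), aspherical upgrades to atoroidal. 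For the parenthetical remark on Gromov hyperbolic groups I would simply recall that $\integers^2$ is not hyperbolic and that subgroups of hyperbolic groups are hyperbolic, so a hyperbolic $\pi_1(X)$ automatically contains no $\integers^2$.

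With the classes now atoroidal, the equality $\TC(X)=4n$ is a direct citation. For the first two regimes I apply Theorem \ref{theo:main}: their characteristic conditions ($\characteristic(F)=0$ or $\characteristic(F)>2n$) are exactly what guarantees $\binom{2n}{n}\ne 0$ in $F$. For the alternative I apply Theorem \ref{theo:specialmain}, whose hypothesis $\characteristic(F)\ne 2$ is exactly what makes the factor $(-2)^n$ invertible. In all three cases $\dim(X)=2n$ matches the standing hypothesis of these theorems, so we get equality.

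For the final, dimension-free claim I would inspect the \emph{lower} half of the proofs of Theorems \ref{theo:main} and \ref{theo:specialmain}. The chain ``$\wgt(\overline u)\ge 2$ by Proposition \ref{prop:vanish_ac}, hence $\wgt(\overline u^{2n})\ge 2n$ by Theorem \ref{theo:wgt_rules}, hence $\TC(X)\ge 4n$'' uses only that $\overline u^{2n}\ne 0$, which in turn rests solely on $u^n\ne 0\in H^{2n}(X;F)$ together with the characteristic condition; at no point is $\dim(X)=2n$ invoked. The dimension enters only through the upper bound $\TC(X)\le 2\dim(X)$, so discarding the dimensional hypothesis costs precisely that upper bound and leaves $\TC(X)\ge 4n$ standing. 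The step I expect to be the main obstacle is none of the individual deductions but rather the correct threading of hypotheses across the three regimes: one must check in each case that the characteristic assumption simultaneously delivers atoroidality through Proposition \ref{prop:automatic_atoroidal} (where $\characteristic(F)=0$ is genuinely indispensable unless $\pi_1(X)$ is torsion-free) and the nonvanishing of the relevant combinatorial factor, so that no case slips through with an incompatible combination.
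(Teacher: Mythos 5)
Your treatment of the equality $\TC(X)=4n$ is correct and follows the paper's own route: $2$-asphericity makes $u$ (resp.\ the $u_j$) aspherical, Proposition \ref{prop:automatic_atoroidal} upgrades aspherical to atoroidal under the stated $\pi_1$/characteristic combinations (and your threading of these combinations, including the observation that positive characteristic forces the torsion-free hypothesis, matches what the paper's proof tacitly does), and then Theorems \ref{theo:main} and \ref{theo:specialmain} apply verbatim since $\dim X=2n$. Where you genuinely diverge is the final, dimension-free claim $\TC(X)\ge 4n$. The paper proves it by restricting to a skeleton $X^{(k)}\subset X$, using injectivity of $H^j(X;F)\to H^j(X^{(k)};F)$ to apply the main theorem to the skeleton, and then invoking ``monotonicity'' $\TC(X)\ge\TC(X^{(k)})$. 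You instead observe that the lower-bound half of the proofs of the main theorems --- $\wgt(\overline u)\ge 2$ from Proposition \ref{prop:vanish_ac}, the product inequality of Theorem \ref{theo:wgt_rules}, and the K\"unneth computation giving $\overline u^{2n}\ne 0$ (which needs only $u^n\ne 0$ and the characteristic condition) --- never invokes $\dim(X)=2n$; the dimension enters only through the upper bound $\TC(X)\le 2\dim(X)$. Your route is not merely valid but more robust: $\TC$ is \emph{not} monotone under inclusion of subcomplexes (e.g.\ $\TC(S^1)=1>0=\TC(D^2)$ for the $1$-skeleton of a disk), so the paper's skeleton-plus-monotonicity step is exactly the kind of argument that needs your observation (or a retraction) to be made watertight, whereas applying the weight machinery to $X$ directly avoids the issue altogether.

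One small repair to a side remark: your justification of the parenthetical ``Gromov hyperbolic'' example, namely that ``subgroups of hyperbolic groups are hyperbolic,'' is false in general (there exist even finitely presented non-hyperbolic subgroups of hyperbolic groups, by work of N.~Brady). The fact you need is nonetheless standard: a hyperbolic group contains no subgroup isomorphic to $\integers^2$, for instance because the centralizer of an infinite-order element is virtually cyclic. Since this concerns only the illustrative example in the hypothesis and not the logical structure of the proof, it does not affect your argument.
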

\begin{proof}
  Because $X$ is assumed to be $2$-aspherical, all classes $u$ and $u_i$ are
  aspherical. By Proposition \ref{prop:automatic_atoroidal} and the additional
  assumptions made, they are then also
  atoroidal. If $\dim(X)=4n$, the claim follows then immediately from our main Theorems
  \ref{theo:main} and \ref{theo:specialmain}. If $\dim(X)>4n$, we use the
  $4n$-dimensional $4n$-skeleton $X^{(4n)}\subset X$ and the injection
  $H^j(X;F)\to H^j(X^{(4n)};F)$ for $j\le 4n$ to conclude that
  $\TC(X^{(4n)})=4n$ and then by monotonicity $\TC(X)\ge \TC(X^{(4n)})=4n$. 
\end{proof}

\begin{example}
  Let $X$ be a $2n$-dimensional CW-complex with a CAT(-1)-metric, $F$ a
  coefficient field with $\characteristic(F)>2n$ and a class
  $u\in H^2(X;F)$ with $u^n\ne 0\in H^{2n}(X;F)$.

  Then $\TC(X) =4n$.
\end{example}
\begin{proof}
  If $X_i$ has a $CAT(-1)$ metric it is aspherical and hence, being finite
  dimensional, its fundamental 
  group is torsion-free and it is $2$-aspherical. In addition, because of
  strict negative curvature $\pi_1(X)$ is Gromov hyperbolic. We are therefore in a
  special case of Proposition \ref{prop:2-aspherical_pi1}.
\end{proof}

\begin{example}
  For $i=1,\dots,n$ let $X_i$ be an aspherical $2$-dimensional simplicial complex with a
  class $v_i\in H^2(X_i;F)$ for some coefficient field $F$ of characteristic
  $\characteristic(F)\ne 2$ and such
  that $v_i\ne 0\in H^{2}(X_i;F)$, and assume that $\pi_1(X_i)$ does not
  contain a subgroup isomorphic to $\integers^2$.
  Setting $X:=X_1\times\dots\times X_n$ we have $\TC(X)=4n$.

  \smallskip
 For a second class of examples, let $X$ satisfy the cohomological conditions
 as before, but without the assumption that the
 $X_i$ are aspherical and that $\pi_1(X_i)$ does not contain
 $\integers^2$. Assume that $f\colon Y\to X$ is a 
 strict  hyperbolization as in \cite{CharneyDavis}. Then $\TC(Y)=4n$.
\end{example}
\begin{proof}
  For $i=1,\dots,n$, the fatct that $X_i$ is aspherical and of finite
  dimension implies that it is
  $2$-aspherical and that $\pi_1(X_i)$ is torsion-free. We are therefore essentially in a 
  special case of Proposition \ref{prop:2-aspherical_pi1}. More precisely,
  setting $u_i:=\pr_i^*v_i$ for the projection $\pi_i\colon X\to X_i$ the
  classes $u_i$ are atoroidal as pullbacks of atoroidal classes and by the
  K\"unneth formula $u_1\cup\dots\cup u_n\ne 0 \in H^{2n}(X;F)$. The statement
  then follows from Theorem \ref{theo:specialmain}.

  For the second class of examples observe that, by definition, a strict
  hyperbolization is in particular a map $f\colon Y\to 
  X$ such that $Y$ is a simplicial complex with $\dim(X)=\dim(Y)$ which admits
  a $CAT(-1)$ metric and such that $f^*\colon H^*(Y;F)\to H^*(X;F)$ is
  injective\footnote{The formulation in \cite[Section 2]{CharneyDavis} uses
    the dual statement (5) of surjectivity in homology, but as observed in
    \cite[Section 2]{CharneyDavis}, this homological statement is a direct
    consequence of the conditions (1)--(4), and one sees immediately that they
    also imply injectivity in cohomology, and this with arbitrary even local
    coefficients}. As a consequence, $Y$ satisfies the asphericity and cohomological
  properties of Proposition \ref{prop:2-aspherical_pi1} and therefore $\TC(Y)=4n$.
\end{proof}

Concrete special cases of this example are given as follows:
\begin{example}

  We consider here the following types of CW-complexes:
  \begin{itemize}
  \item Let $\Gamma=\langle x_1,\dots,x_n\mid r\rangle$ be a $1$-relator group
    such that the relation $r$ is not a proper 
    power. Then the presentation complex $X$ is an aspherical 2-dimensional
    simplicial complex (compare \cite{Lyndon} and \cite[Section
    5]{ChiswellCollinsHuebschmann}). The relation $r$ is a word in the free
    group generated by $x_1,\dots, x_n$. If the image of $r$ in the
    abelianization of this group is zero (i.e.~the exponent sum in $r$ of
    $x_i$ is zero for each $i$), then $H^2(X;\integers)\iso\integers$. If for
    some $p>2$ the exponent sum in $R$ of each $x_i$ is divisible by $p$, then
    $H^2(X; \integers/p\integers)\iso \integers/p\integers$. In each case, a
    generator of this second cohomology is given by a class dual to the single
    $2$-cell in $X$. In addition, we assume that $\Gamma$ does not contain a
    subgroup isomorphic to $\integers^2$. Observe from the examples in
    \cite{Moldavanskii} that it is quite hard to decide from the relation
    whether this is the case or not.
  \item Let $M$ be an oriented compact connected prime atoroidal 3-manifold
    with non-empty boundary homeomorphic to $T^2$. Assume moreover that
    $\pi_2(M)=0$ and $\pi_1(M)$ is torsion 
    free. For example, let  $M$ be the complement of a hyperbolic knot in
    a closed 3-manifold $Y$ (i.e.~such that this 
    complement admits a complete Riemannian metric of constant sectional
    curvature $-1$ and finite volume). 

    Let $X$ be the 2-skeleton of $M$ for a CW-decomposition of $M$ (which is
    homotopy equivalent of $M$). Then $X$ is aspherical by \cite[Section
    5]{ChiswellCollinsHuebschmann} and every class in $H^2(X;F)$ is atoroidal. 
  \end{itemize}

  Pick now $X_1,\dots,X_n$ such that each $X_i$ is either a 1-relator presentation
  2-complex or a 3-manifold as above. Let $F$ be a field with
  $\characteristic(F)\ne 2$ and assume that 
  $H^2(X_i;F)\ne 0$ for each $i=1,\dots,n$
  (for the 1-relator 2-complexes: this means that the relation $r$ is zero
  modulo $\characteristic(F)$ in the abelianization of the group).

  Then $X:=X_1\times\dots\times X_n$ satisfies $TC(X)=4n$.

  We remark that the condition for the 1-relator presentations is very easy and
  convenient to check, in particular, we do not have to investigate the
  cup-product structure, but just compute the second homology additively.
\end{example}
\begin{proof}
  The only assertion to prove is that for the 3-manifolds as specified every
  class in $H^2(M;F)$ is atoroidal. For this, we observe that by the torus
  theorem for the class of
  3-manifolds considered every injective map $\integers^2\to \pi_1(M)$ is up
  to conjugation (which induces the identity map in cohomology) induced by a
  map $T^2\to \boundary M$, compare \cite[Section 7,
  (K.5)]{AschenbrennerWilton}.

The long exact cohomology sequence of the pair $(M,\boundary M)$ gives us
\begin{equation*}
  \begin{CD}
    H^2(M;F) @>{\iota^*}>> H^2(\boundary M;F) @>>> H^3(M,\boundary M;F) @>>> 0\\
    @VV=V @VV{\iso}V @VV{\iso}V @VV=V\\
    H^2(M;F) @>>> F @>{\iso}>> F @>>> 0
  \end{CD}
\end{equation*}
It follows from exactness that the restriction map $\iota^*\colon H^2(M;F)\to
H^2(\boundary M;F)$ is the zero map.

Now, by the torus theorem, we conclude that
\begin{itemize}
\item either $f\colon T^2\to M$ induces a non-injective map on fundamental
  groups and hence factors through $S^1$ so that $f^*u=0$ for all
  $u\in H^2(M;F)$
\item or $f\colon T^2\to M$ is homotopic to a map with image in $\boundary M$
  and hence again $f^*u=0$ for all $u\in H^2(M:F)$.
\end{itemize}
\end{proof}

\begin{bibdiv}
  \begin{biblist}

\bib{AschenbrennerWilton}{book}{
   author={Aschenbrenner, Matthias},
   author={Friedl, Stefan},
   author={Wilton, Henry},
   title={3-manifold groups},
   series={EMS Series of Lectures in Mathematics},
   publisher={European Mathematical Society (EMS), Z\"{u}rich},
   date={2015},
   pages={xiv+215},
   isbn={978-3-03719-154-5},
   review={\MR{3444187}},
   doi={10.4171/154},
}
\bib{Borat}{article}{
   author={Borat, A.},
   title={Symplectically aspherical manifolds with nontrivial flux groups},
   journal={Acta Math. Hungar.},
   volume={149},
   date={2016},
   number={2},
   pages={523--525},
   issn={0236-5294},
   review={\MR{3518652}},
   doi={10.1007/s10474-016-0601-6},
 }
 \bib{CharneyDavis}{article}{
   author={Charney, Ruth M.},
   author={Davis, Michael W.},
   title={Strict hyperbolization},
   journal={Topology},
   volume={34},
   date={1995},
   number={2},
   pages={329--350},
   issn={0040-9383},
   review={\MR{1318879}},
   doi={10.1016/0040-9383(94)00027-I},
 }
\bib{ChiswellCollinsHuebschmann}{article}{
   author={Chiswell, Ian M.},
   author={Collins, Donald J.},
   author={Huebschmann, Johannes},
   title={Aspherical group presentations},
   journal={Math. Z.},
   volume={178},
   date={1981},
   number={1},
   pages={1--36},
   issn={0025-5874},
   review={\MR{627092}},
   doi={10.1007/BF01218369},
}
\bib{CohenVandembroucq}{article}{
   author={Cohen, Daniel C.},
   author={Vandembroucq, Lucile},
   title={Topological complexity of the Klein bottle},
   journal={J. Appl. Comput. Topol.},
   volume={1},
   date={2017},
   number={2},
   pages={199--213},
   issn={2367-1726},
   review={\MR{3975552}},
   doi={10.1007/s41468-017-0002-0},
 }
 \bib{DavisJanuszkiewicz}{article}{
   author={Davis, Michael W.},
   author={Januszkiewicz, Tadeusz},
   title={Hyperbolization of polyhedra},
   journal={J. Differential Geom.},
   volume={34},
   date={1991},
   number={2},
   pages={347--388},
   issn={0022-040X},
   review={\MR{1131435}},
}
\bib{Farber}{article}{
   author={Farber, Michael},
   title={Topological complexity of motion planning},
   journal={Discrete Comput. Geom.},
   volume={29},
   date={2003},
   number={2},
   pages={211--221},
   issn={0179-5376},
   review={\MR{1957228}},
   doi={10.1007/s00454-002-0760-9},
}
	
\bib{Farber_instabilities}{article}{
   author={Farber, Michael},
   title={Instabilities of robot motion},
   journal={Topology Appl.},
   volume={140},
   date={2004},
   number={2-3},
   pages={245--266},
   issn={0166-8641},
   review={\MR{2074919}},
   doi={10.1016/j.topol.2003.07.011},
 }
 \bib{FarberGrant}{article}{
   author={Farber, Michael},
   author={Grant, Mark},
   title={Symmetric motion planning},
   conference={
      title={Topology and robotics},
   },
   book={
      series={Contemp. Math.},
      volume={438},
      publisher={Amer. Math. Soc., Providence, RI},
   },
   date={2007},
   pages={85--104},
   review={\MR{2359031}},
   doi={10.1090/conm/438/08447},
}
    \bib{GrantMescher}{article}{
   author={Grant, Mark},
   author={Mescher, Stephan},
   title={Topological complexity of symplectic manifolds},
   journal={Math. Z.},
   volume={295},
   date={2020},
   number={1-2},
   pages={667--679},
   issn={0025-5874},
   review={\MR{4100027}},
   doi={10.1007/s00209-019-02366-x},
}
\bib{IwaseSakaiTsutaya}{article}{
   author={Iwase, Norio},
   author={Sakai, Michihiro},
   author={Tsutaya, Mitsunobu},
   title={A short proof for ${\rm tc}(K)=4$},
   journal={Topology Appl.},
   volume={264},
   date={2019},
   pages={167--174},
   issn={0166-8641},
   review={\MR{3975098}},
   doi={10.1016/j.topol.2019.06.014},
 }
 \bib{Lyndon}{article}{
   author={Lyndon, Roger C.},
   title={Cohomology theory of groups with a single defining relation},
   journal={Ann. of Math. (2)},
   volume={52},
   date={1950},
   pages={650--665},
   issn={0003-486X},
   review={\MR{47046}},
   doi={10.2307/1969440},
 }
 \bib{Moldavanskii}{article}{
   author={Moldavanski\u{\i}, D. I.},
   title={Certain subgroups of groups with one defining relation},
   language={Russian},
   journal={Sibirsk. Mat. \v{Z}.},
   volume={8},
   date={1967},
   pages={1370--1384},
   issn={0037-4474},
   review={\MR{220810}},
}
	
\bib{Sandrock}{thesis}{
  author={Sandrock, Luca},
  title={Topological complexity of symplectic CW-complexes},
  school={Universit\"at G\"ottingen},
  type={Bachelor thesis},
  date={2024},
  }
  \end{biblist}
\end{bibdiv}

\end{document}